\documentclass[a4paper, fleqn, abstract=true, headings=small, english, bibliography=totoc,12pt]{article}
\usepackage{setspace}
\usepackage{indentfirst} 

\usepackage[
  textwidth=16.9cm,  
  hmarginratio=1:1,   
  vmargin=2.3cm
]{geometry}

\usepackage[T1]{fontenc}
\usepackage[utf8]{inputenc}
\usepackage{csquotes}
\usepackage{bbm}

\usepackage{amsfonts,amsmath,amssymb,esint}
\usepackage{mathtools}
\usepackage{amsthm}
\usepackage{thmtools}

\usepackage{stmaryrd}

\usepackage[dvipsnames]{xcolor}

\usepackage{array}

\usepackage[alwaysadjust]{paralist}

\usepackage{babel}
\usepackage[theoremfont, osf]{newpxtext}
\usepackage[slantedGreek]{newpxmath}

\usepackage[shortcuts]{extdash}

\definecolor{purple}{cmyk}{0.75,0.90,0,0}
\definecolor{DB}{rgb}{0.07,0.0,0.5}
\definecolor{DG}{rgb}{0.0,0.37,0.07}%
\definecolor{DR}{rgb}{0.37,0,0.07}

\usepackage[
pdftitle={},%
pdfauthor={}, hyperindex=true, colorlinks=true, urlcolor={DR}, linkcolor={DR}, menucolor={DR}, citecolor={DR}, anchorcolor={DR},linktoc=all,pagebackref,final]{hyperref}

\theoremstyle{plain}
\newtheorem{proposition}{\protect\propositionname}[section]
\newtheorem{lemma}[proposition]{\protect\lemmaname}

\newtheorem{theorem}[proposition]{\protect\theoremname}

\usepackage{etoolbox}
\theoremstyle{definition}
\newtheorem{example}[proposition]{\protect\examplename}
\AtBeginEnvironment{example}{%
	\pushQED{\qed}%
}
\AtEndEnvironment{example}{\popQED\endexample}

\theoremstyle{remark}
\newtheorem{remark}[proposition]{\protect\remarkname}
\AtBeginEnvironment{remark}{%
	\pushQED{\qed}%
}
\AtEndEnvironment{remark}{\popQED\endremark}

\newtheorem{Step}{Step}
\newcounter{stepcount}
\numberwithin{Step}{stepcount}
\newcommand{\resetstep}{\stepcounter{stepcount}}

\declaretheoremstyle[%
spaceabove=.3\topsep,%
spacebelow=.3\topsep,%
headfont=\small\itshape,%
headpunct={. ---},%
notefont=\normalfont\itshape,%
bodyfont=\itshape,%
headindent=\parindent,%
numbered=yes,%
]{mystyle1}%

\newcounter{casecount}
\numberwithin{Case}{casecount}

\newenvironment{solution}{\noindent\emph{\protect\solutionname. }\pushQED{\qed}}{\popQED}

\newcommand*{\propositionname}{}
\newcommand*{\lemmaname}{}
\newcommand*{\sublemmaname}{}
\newcommand*{\theoremname}{}
\newcommand*{\corollaryname}{}
\newcommand*{\definitionname}{}
\newcommand*{\notationname}{}
\newcommand*{\claimname}{}
\newcommand*{\examplename}{}
\newcommand*{\remarkname}{}
\newcommand*{\exercisename}{}
\newcommand*{\openproblemname}{}
\newcommand*{\propertyname}{}

\newcommand*{\casename}{}
\newcommand*{\Claimname}{}
\newcommand*{\factname}{}

\newcommand*{\Casename}{}
\newcommand*{\solutionname}{}

\addto\captionsenglish{%
	\renewcommand{\propositionname}{Proposition}%
	\renewcommand{\lemmaname}{Lemma}%
	\renewcommand{\sublemmaname}{Sublemma}%
	\renewcommand{\theoremname}{Theorem}%
	\renewcommand{\corollaryname}{Corollary}%
	\renewcommand{\definitionname}{Definition}%
	\renewcommand{\notationname}{Notation}%
	\renewcommand{\claimname}{Claim}%
	\renewcommand{\examplename}{Example}%
	\renewcommand{\remarkname}{Remark}%
	\renewcommand{\exercisename}{Exercise}%
	\renewcommand{\openproblemname}{Question}%
	\renewcommand{\propertyname}{Property}%
	\renewcommand{\casename}{Case}%
	\renewcommand{\Claimname}{Claim}%
	\renewcommand{\factname}{Fact}%
	\renewcommand{\Casename}{Case}%
	\renewcommand{\solutionname}{Solution}%
}

\addto\captionsfrench{%
	\renewcommand{\propositionname}{Proposition}%
	\renewcommand{\lemmaname}{Lemme}%
	\renewcommand{\sublemmaname}{Sous-lemme}%
	\renewcommand{\theoremname}{Théorème}%
	\renewcommand{\corollaryname}{Corollaire}%
	\renewcommand{\definitionname}{Définition}%
	\renewcommand{\notationname}{Notation}%
	\renewcommand{\claimname}{Affirmation}%
	\renewcommand{\examplename}{Exemple}%
	\renewcommand{\remarkname}{Remarque}%
	\renewcommand{\exercisename}{Exercice}%
	\renewcommand{\openproblemname}{Question}%
	\renewcommand{\propertyname}{Propriété}%
	\renewcommand{\casename}{Cas}%
	\renewcommand{\Claimname}{Affirmation}%
	\renewcommand{\factname}{Fait}%
	\renewcommand{\Casename}{Cas}%
	\renewcommand{\solutionname}{Solution}%
}

\newcommand*{\intd}[1]{\mathop{}\!\diffd{#1}}
\newcommand*{\diffd}{{\operatorfont d}}

\newcommand*{\mE}{\mathscr{E}}
\newcommand*{\mF}{\mathscr{F}}

\newcommand*{\st}{\mathpunct{:}}

\newcommand*{\Lap}{\mathrm{\Delta}}

\newcommand{\ve}{\varepsilon}
\newcommand{\fo}{\forall\, }
\newcommand{\bp}{\begin{proof}}
	\newcommand\ep{\end{proof}}

\newcommand*{\cs}{_\textnormal{c}}
\newcommand*{\loc}{_\textnormal{loc}}

\def\R{{\mathbb R}}
\def\N{{\mathbb N}}

\newcommand{\verti}[1]{{\left\vert #1 \right\vert}}
\newcommand{\vertii}[1]{{\lVert #1 \rVert}} 
\newcommand{\vertiii}[1]
{{\left\vert\kern-0.25ex\left\vert\kern-0.25ex\left\vert #1 \right\vert\kern-0.25ex\right\vert\kern-0.25ex\right\vert}}
	\newcommand{\be}{\begin{equation}}
	\newcommand{\ee}{\end{equation}}
\newcommand{\bes}{\begin{equation*}}
	\newcommand{\ees}{\end{equation*}}
\newcommand{\ba}{\begin{aligned}{}}
	\newcommand{\ea}{\end{aligned}{}}

\DeclareMathOperator{\supp}{supp}

\DeclarePairedDelimiterX{\abs}[1]{\lvert}{\rvert}{\ifblank{#1}{\:\cdot\:}{#1}}

\DeclarePairedDelimiterX{\norm}[1]{\lVert}{\rVert}{\ifblank{#1}{\:\cdot\:}{#1}}

\DeclarePairedDelimiterX{\floor}[1]{\lfloor}{\rfloor}{\ifblank{#1}{\:\cdot\:}{#1}}

\DeclarePairedDelimiterX{\set}[2]{\lbrace}{\rbrace}{\ifblank{#2}{#1}{#1\st #2}}
\DeclarePairedDelimiterX{\ooInterval}[2]{\lparen}{\rparen}{#1,#2}
\DeclarePairedDelimiterX{\ccInterval}[2]{\lbrack}{\rbrack}{#1,#2}
\DeclarePairedDelimiterX{\coInterval}[2]{\lbrack}{\rparen}{#1,#2}
\DeclarePairedDelimiterX{\ocInterval}[2]{\lparen}{\rbrack}{#1,#2}

\DeclarePairedDelimiterX{\farg}[1]{\lparen}{\rparen}{\ifblank{#1}{\:\cdot\:}{#1}}
\DeclarePairedDelimiterX{\linfarg}[1]{\lbrack}{\rbrack}{\ifblank{#1}{\:\cdot\:}{#1}}
\DeclarePairedDelimiterX{\ScalarProd}[2]{\lparen}{\rparen}{\ifblank{#1}{\:\cdot\:}{#1} \mathbin\delimsize| \ifblank{#2}{\:\cdot\:}{{#2}}}
\DeclarePairedDelimiterX{\DualityProd}[2]{\langle}{\rangle}{\ifblank{#1}{\:\cdot\:}{#1},\ifblank{#2}{\:\cdot\:}{#2}}
\DeclarePairedDelimiterX{\FromTo}[2]{\lparen}{\rparen}{\ifblank{#2}{#1}{#1;#2}}

\numberwithin{equation}{section}

\newcommand\blfootnote[1]{%
  \begingroup
  \renewcommand\thefootnote{}\footnote{#1}%
  \addtocounter{footnote}{-1}%
  \endgroup
}

\begin{document}



\title{$L^2-$regularity of minimizers of anisotropic interaction functionals}
\author{Tristan Bullion-Gauthier \and Kai Xiao}
\date{\today}







\maketitle

\begin{abstract}
  We consider interaction functionals of the form 
  \begin{equation*}
  \nu\mapsto {\mathscr E}(\nu)=\int_{{\mathbb R}^N}\int_{{\mathbb R}^N} W(x-y)\intd\nu(y)\intd\nu(x)+\int_{{\mathbb R}^N} V(x)\intd\nu(x)\text{,}
  \end{equation*}
  involving an anisotropic  kernel $W$ and a general confinement potential $V$. Under standard assumptions on $W$ and its Fourier transform, we show that the  minimizer of ${\mathscr E}$ has $L^2$ density. Our argument relies on a careful analysis of nonlocal variational inequalities, which may be of independent interest.
\blfootnote{TB $\&$ KX. Universite Claude Bernard Lyon 1, CNRS,  Centrale  Lyon, INSA Lyon, Université Jean Monnet, ICJ UMR5208, 69622 Villeurbanne, France }
\blfootnote{TB \url{bullion@math.univ-lyon1.fr}}
\blfootnote{KX \url{xiao@math.univ-lyon1.fr}}
\blfootnote{Keywords: Nonlocal energies, anisotropic interactions, variational inequalities}
\blfootnote{Acknowledgments: We thank Petru Mironescu for introducing us to this research topic and for his interesting questions which are at the core of the present article. We are grateful to him for his careful reading and suggestions to improve the presentation. We also thank Sofiane Cherf for many insightful discussions. }
\blfootnote{Use of AI:  No generative AI was used in the preparation of this article.}
\end{abstract}



\section{Introduction} 
\noindent
\textit{Setting and main result.}
We study the regularity of minimizers of the functional given by 
\be \label{functional}
\mathscr{P}(\R^N) \ni \nu \mapsto  \mathscr{E}(\nu)\coloneq \int_{\R^N}\int_{\R^N}W(x-y) \intd\nu(y) \intd{\nu(x)} + \int_{\R^N}V(x) \intd\nu(x)\text{.}
\ee

Here, $N\ge 3$.
We assume that
\be\label{hv}\tag{HV}
 V \in (C^0\cap H^2_{\loc})(\R^N ; \R_+)\ \text{is a non-negative coercive function.} 
\ee
Concerning $W$, we assume that it is of the form
\be\label{H1} \tag{HW}
\ba
&W(x)=\frac{\Psi\left(x/\verti{x}\right)}{\verti{x}^{N-2}}\text{, where} \ \Psi \in C(\mathbb{S}^{N-1}) \ \text{is even and positive.}
\ea
\ee
We also assume that the Fourier transform of $W$ is of the form
\be\label{H2} \tag{HFW}
\ba
& \displaystyle \mathscr{F}(W)(\xi)= \frac{m(\xi/\verti{\xi})}{\verti{\xi}^2}\text{,} \ \ \text{where} \ m \in L^{\infty}(\mathbb{S}^{N-1}) \ \text{satisfies}
\\ & 0<m_0\le m(\omega)\le m_1<\infty\text{, } \fo\omega\in\mathbb{S}^{N-1}\text{.}
\ea
\ee 
 The fact that  $\mathscr{F}(W)$ is a $(-2)$-homogeneous tempered distribution is classical. When $N\geq 5$, $\mathscr{F}(W)$ belongs to $L^1_{\loc}$, and, in particular, $m$ in \eqref{H2} is an $L^1$ function. This is also the case when   $N=3, 4$ and $\Psi$ is sufficiently smooth (see, e.g.,  Lemoine~\cite[Theorem 3.2.4]{Lemoine1972Fourier}), but need not be  the case under the sole assumption \eqref{H1}.   The additional conditions that we impose on $m$ are essentially the same as in, e.g., Carillo \textit{et al.\ }\cite{carrillo2021equilibrium}, Mateu \textit{et al.\ }\cite{mateu2023explicit}, and Frank \textit{et al.\ }\cite{frank2026explicit}; see the discussion after the statement of Theorem \ref{main}. 

\smallskip
A typical example of a kernel $W$ satisfying \eqref{H1} and \eqref{H2} is
\bes
W_{\alpha}(x)\coloneq\frac{1}{\verti{x}^{N-2}}+ \alpha \frac{x_1^2}{\verti{x}^N}\text{, }\text{where }\alpha \in (-1,N-2)\text{,}
\ees
was considered in \cite{carrillo2021equilibrium}. It satisfies
\bes
\mathscr{F}(W_{\alpha})(\xi)=C_N\frac{(N-2-\alpha)\xi_1^2 + (N-2+\alpha)\sum_{k=2}^N \xi_k^2}{\verti{\xi}^4}>0\text{,}
\ees
where $C_N>0$ is a dimensional constant (see Stein~\cite [(32), p.73]{stein1970singular}). 

\smallskip
Other examples of kernels satisfying \eqref{H1} and \eqref{H2} 
are  \enquote {small} perturbations of the Coulomb kernel $1/|x|^{N-2}$, e.g.,
\bes
W(x)=\frac{1}{\verti{x}^{N-2}}+ \varepsilon \frac{\Omega(x/\verti{x})}{\verti{x}}, \text{where} \ \Omega \in C^{\infty}(\mathbb{S}^{N-1}) \ \text{and} \ \varepsilon>0 \ \text{is sufficiently small.}
\ees

\smallskip
Under the assumptions \eqref{hv}, \eqref{H1}, and \eqref{H2}, there exists a unique minimizer of the functional $\mathscr{E}$,  and this minimizer is compactly supported. In special cases, this was proved by  several authors, see, e.g., 
Serfaty \cite{serfaty2015coulomb}, Mora, Rondi, and Scardia~\cite{mora2019equilibrium}, Mateu \textit{et al.\ }\cite{mateu2023explicit}. 
For the convenience of the reader, we present a full proof of this fact in Appendix \ref{A}.

 \smallskip
Our main result is the following.
\begin{theorem}\label{main} Assume that \eqref{hv}, \eqref{H1}, and \eqref{H2} hold. Then, the unique minimizer $\mu$ of $\mE$  has $L^2$ density.
\end{theorem}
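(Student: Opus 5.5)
We will show that the potential $u\coloneq W*\mu$ lies in $H^2_{\loc}$ with $\verti{\Lap u}$ controlled on $\supp\mu$; the symbol bounds in \eqref{H2} then give $\mu\in L^2$. We use the uniqueness and compact support of the minimizer proved in Appendix~\ref{A}.

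\emph{Step 1: Euler--Lagrange relations.} Comparing $\mE(\mu)$ with $\mE((1-t)\mu+t\nu)$ for probability measures $\nu$ of finite energy, and letting $t\to0^+$, we obtain the usual obstacle-type conditions. With $u\coloneq W*\mu$ and some constant $C$,
\bes
2u+V\ge C \ \text{quasi-everywhere on }\R^N,\qquad 2u+V= C \ \ \mu\text{-a.e.}
\ees
Since $\mu$ has finite energy, \eqref{H2} gives $\int \verti{\xi}^{-2}\verti{\hat\mu}^2<\infty$, so $u\in \dot H^1$. Moreover $u$ is the potential of a nonnegative measure for the elliptic-type operator $L$ with symbol $\verti{\xi}^2/m(\xi/\verti{\xi})$, in the sense that $L u=\mu$. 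Thus $u$ solves a nonlocal obstacle problem with obstacle $\phi=(C-V)/2\in H^2_{\loc}$: we have $u\ge\phi$, $Lu=\mu\ge0$, and $Lu=0$ off the contact set. One caveat is that $L$ is a Fourier multiplier of order $2$ with only $L^\infty$ angular dependence, so it is not a differential operator in general.

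\emph{Step 2: variational inequality.} We recast these relations as a variational inequality. The minimizer $u$ minimizes
\bes
v\mapsto \int \frac{\verti{\xi}^2}{m}\verti{\hat v}^2\intd\xi
\ees
over the closed convex set $K=\set{v\in\dot H^1}{v\ge\phi\text{ near }\supp\mu}$, after localization. Here we use compact support and coercivity of $V$, and we possibly modify $\phi$ far away. The corresponding inequality is
\bes
\int \frac{\verti{\xi}^2}{m}\,\hat u\,\overline{(\hat v-\hat u)}\ge0\qquad\fo v\in K.
\ees

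\emph{Step 3: $H^2$ estimate by difference quotients.} This is the key step, and I expect it to be the main obstacle. We run the Lewy--Stampacchia / Brezis--Kinderlehrer difference-quotient argument. We test with $v=u+t(\tau_h u-u)-t\,\delta_h$-corrections, chosen so that $v\in K$; the natural candidates are $\max(u,\tau_{\pm h}u+\phi-\tau_{\pm h}\phi)$-type translates. We then exploit translation invariance of the multiplier: the quadratic form commutes with $\tau_h$. This yields
\bes
\norm{\tau_h u-u}_{\dot H^1,m}^2\lesssim \abs{h}^2\,\norm{\phi}_{H^2}\cdots,
\ees
and hence $\nabla u\in H^1_{\loc}$, i.e.\ $\verti{\xi}^2\hat u\in L^2$ locally.

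The difficulty is that truncation (max/min) does not interact well with a nonlocal form. Unlike the Dirichlet integral, $\int m^{-1}\verti{\xi}^2\verti{\widehat{\max(a,b)}}^2$ is not bounded by the sum of the two forms. I would first try to handle this via the Lewy--Stampacchia inequality $0\le Lu\le (L\phi)^+$ on the contact set, proved by penalization. We approximate by $L u_\ve=\beta_\ve(\phi-u_\ve)$ with $\beta_\ve$ increasing. Testing with $(\beta_\ve(\phi-u_\ve)-(L\phi)^+)^+$ then requires only the positivity of the off-diagonal part of the form, i.e.\ a Kato/Beurling--Deny-type inequality $\langle Lw,w^+\rangle\ge \langle L w^+,w^+\rangle$. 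That inequality holds when the kernel $W$ is positive, which is exactly \eqref{H1}. Formally $(L\phi)^+$ involves $\Lap V$ through $L$, which is bounded $H^2\to L^2$ by $m\ge m_0$, so this gives the needed estimate.

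\emph{Step 4: conclusion.} From Lewy--Stampacchia we get $\mu=Lu\le (L\phi)^+\mathbbm{1}_{\supp\mu}\in L^2$, since $\phi\in H^2_{\loc}$ and $\supp\mu$ is compact. Therefore $\mu$ is absolutely continuous with $L^2$ density, proving Theorem~\ref{main}.
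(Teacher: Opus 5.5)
You have a genuine gap in Step~3, at exactly the inequality you rely on. Steps 1--2 follow the paper's Sections 2--3 in outline. Identifying $a(u,u)$ with the pairing of $\mu$ against the pointwise values of $W*\mu$ still needs an argument, which the paper supplies via $\mu$-a.e.\ continuity and boundedness of $W*\mu$.

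Since $\langle Lw,w^+\rangle-\langle Lw^+,w^+\rangle=-a(w^-,w^+)$, the inequality $\langle Lw,w^+\rangle\ge\langle Lw^+,w^+\rangle$ for general $w$ is the first Beurling--Deny criterion. It is equivalent to positivity of the semigroup $e^{-tL}$, whose multiplier is $e^{-t\verti{\xi}^2/m(\xi/\verti{\xi})}$. By Bochner's theorem and the L\'evy--Khintchine formula, this forces $\verti{\xi}^2/m(\xi/\verti{\xi})$ to be a negative definite function. A $2$-homogeneous one has no L\'evy part, since the L\'evy measure would scale so that $\int_{\verti{x}<1}\verti{x}^2\, d\nu$ diverges. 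So it must be a quadratic form.

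Hence your inequality holds only when $W$ is a linear image of the Coulomb kernel; it fails, e.g., for $W_\alpha$ with $\alpha\neq0$. Positivity of the fundamental solution $W$, which is all \eqref{H1} gives, is strictly weaker than positivity of $e^{-tL}$. Compare $\Lap^2$ for $N\ge5$: its fundamental solution $c\verti{x}^{4-N}$ is positive, but its heat kernel changes sign. Without this T-monotonicity, neither the truncation/difference-quotient argument nor the Lewy--Stampacchia penalization goes through. The pointwise bound $\mu\le(L\phi)^+$ in Step~4 is therefore unsupported.

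The missing idea is how the paper makes truncation compatible with the anisotropic form. It penalizes with $\frac{1}{2\ve}\int(v^-)^2$ and adds the regularization $\frac{\ve}{2}\int (Tv)v$, where $T$ has multiplier $1/m$. It then tests the penalized Euler--Lagrange equation with $-T^{-1}(u_\ve^-/\ve)$.

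The multiplier $m$ of $T^{-1}$ cancels the anisotropy: $a(u_\ve,T^{-1}u_\ve^-)=(2\pi)^N\int\nabla u_\ve\cdot\nabla u_\ve^-\le0$. So the truncation only ever meets the isotropic Dirichlet form, where it has a sign. Likewise $\int(Tu_\ve)\,T^{-1}u_\ve^-=\int u_\ve u_\ve^-\le0$. The $L^2$-coercivity of $T^{-1}$ then gives $\vertii{u_\ve^-/\ve}_{L^2}\lesssim\vertii{F}_{(L^2)^*}$, hence $\ve$-uniform $\dot H^2$ bounds. Minty's lemma identifies the weak limit with $u$.

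This produces only an $L^2$ bound on the penalty, not a pointwise Lewy--Stampacchia comparison. That is why the conclusion is $\mu\in L^2$ rather than an $L^\infty$-type bound.
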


\noindent
\textit{Motivation and previous results.}
 The minimization of energies of the form \eqref{functional}, when $W$ is the Coulomb kernel ($-\log(\verti{x})$ in two dimensions), is a classical problem of electrostatics.
 But these energies also appear in  other contexts. For example, they arise in the study of the mean-field description of Coulomb gases, an important model in statistical mechanics (see \cite{serfaty2015coulomb}). They also play an crucial role in mathematical biology for their connection to aggregation equations (central in biological modelling, see Topaz, Bertozzi, and Lewis~\cite{topaz2006nonlocal}). This connection motivated  Carillo \textit{et\ al.\ }to study the existence of minimizers of \eqref{functional}, as well as their uniqueness and regularity properties in, e.g., Balagué \textit{et al.\ }\cite{balague2013dimensionality}, Carrillo, Delgadino, and Mellet~\cite{carrillo2016regularity}, Carrillo and Shu~\cite{carrillo2023radial}.  

\smallskip
The first results on the minimizers of \eqref{functional} were obtained in the isotropic case $W(x)=1/\verti{x}^{N-2}$, starting with the seminal works of Frostman \cite{frostman1935potential}.
 Recently, several explicit results were obtained in the case of anisotropic repulsive interactions $W$, following the important contributions of Mora, Rondi, and Scardia~\cite{mora2019equilibrium} and Carrillo \textit{et al.\ }\cite{carrillo2020ellipse}, where special perturbations of Coulomb interactions were considered in two dimensions. For example, in Mateu \textit{et al.\ }\cite{mateu2023explicit} and Carrillo and Shu~\cite{carrillo2024minimizers}, it is shown that, for $N=3$, $V(x)=|x|^2$, and under assumptions on $W$ in the spirit of \eqref{H1} and \eqref{H2}, the unique minimizer of $\mE$ is the normalized characteristic function of an ellipsoid.    Moreover, it is proved in \cite{mateu2023explicit} and \cite{carrillo2024minimizers} that, if the assumption \eqref{H2} is relaxed to $m\ge 0$,  then the minimizing  measure $\mu$ may be singular and thus have no density.

\smallskip
 These results were obtained \textit{via} the following strategy: (i) prove that minimality is equivalent to the Euler--Lagrange equation associated with $\mE$; (ii) find a solution of the Euler--Lagrange equation within the class of centered ellipsoids. While the first part of this program is robust and can be completed for large classes of potentials $V$ and $W$, the second one is specific to the case where $V$ is a quadratic form, typically $V(x)=|x|^2$, and its conclusion need not hold for a general $V$ (see, e.g., \cite[(2.18)]{mateu2023explicit}). 

\smallskip
In a different direction,  the case of a general $V$ when $W(x)=1/\verti{x}^{N-2}$ was studied in \cite{carrillo2016regularity}, where it is shown that minimizing $\mE$ amounts to solving an obstacle problem. This is exploited in order  to derive regularity results. The starting point in \cite{carrillo2016regularity} is the Euler--Lagrange equation associated with the minimization of $\mE$, which formally reads as
\be
\label{ELeq}
\begin{cases}
 W\ast \mu + 1/2 V  \geq \alpha \coloneq \mE(\mu) - 1/2 \int V \intd\mu\text{,}
\\
W\ast \mu + 1/2 V  = \alpha \ \text{on}  \ \supp \mu\text{,}
\end{cases}
\ee
if $\mu$ is the minimizer of $\mE$.

\smallskip
In the case where $W=1/\verti{x}^{N-2}$, using the above Euler--Lagrange equation, it is proved in \cite{carrillo2016regularity} that the potential $u=W\ast\mu$ satisfies the following obstacle problem 
\be\label{obs int}
\begin{cases}
& u \geq \alpha-1/2 V\text{,}
\\ &-\Lap u \geq 0 \ \text{in} \ \R^N\text{,}
\\ &-\Lap u=N(N-2)\abs{B_1} \mu= 0 \ \text{in} \ \{ u> \alpha -1/2V\} \subset \left(\supp \mu \right)^\mathrm{c}\text{.}
\end{cases}
\ee
This problem is associated with a \textit{variational inequality} 
\be \label{var int}
\ba
& w \in H\text{, } w \geq \alpha -1/2 V\text{,}  
\\& \int_{\Omega} \nabla w \cdot \nabla (v-w) \intd{x} \geq 0\text{,} \ \fo v \in H\text{,} \ v \geq \alpha -1/2V\text{,}
\ea
\ee
for an appropriate domain 
 $\Omega$ and $H^1$-type space $H$. 

\smallskip
While it is rather straightforward to obtain 
 \eqref{ELeq} and to derive from it \eqref{obs int}, getting \eqref{var int} form \eqref{obs int} requires more work. This program has been completed in \cite{carrillo2016regularity}  when $W$ is the Coulomb potential. One of the crucial steps in their approach consists of establishing the continuity of $W\ast\mu$; this relies on the fact that the Coulomb kernel is, up to a constant, a fundamental solution of the Laplacian.  For another approach in this case, see \cite[Proposition 2.22]{serfaty2015coulomb}, and also  Armstrong, Serfaty, and Zeitouni~\cite[Lemma 2.3]{armstrong2014remarks}. Once the validity of \eqref{var int} is established, one can appeal to 
 the regularity theory for obstacle problems and variational inequalities, which has been developed since the 1960's  starting with the works of Lewy, Stampacchia, Brezis, Kinderlehrer, and Frehse. As an example of use of this theory,  it is proved in  \cite[Theorem 3.4]{carrillo2016regularity} that, if $V$ is $W^{2,p}_{\loc}$ with $p>N$, then $\mu$ is $L^{\infty}$. 
 
\smallskip
\noindent
\textit{Comments on the proof of Theorem \ref{main}.}
We rely on the link between the minimization of $\mE$ and variational inequalities described above. In order to consider anisotropic kernels of the form \eqref{H1}, we are led to consider operators and linear or bilinear forms that are defined \textit{via} the Fourier transform.
Two main difficulties arise in this process. 

\smallskip
We face a first difficulty when trying to define a variational inequality involving the potential $u=W\ast\mu$. When $W=1/\verti{x}^{N-2}$, $u$ enjoys  properties that need not hold for a general $W$. More specifically, $u$ is superharmonic, and this allows the authors of \cite{carrillo2016regularity} to prove that $u$ is everywhere continuous. Superharmonicity is also used in the approximation procedure in  \cite[Proposition 2.22]{serfaty2015coulomb}.  This property is not available under the assumptions of Theorem \ref{main}. In order to bypass this difficulty, we rely on  regularity results established in Section \ref{lemmas}, results that may be of independent interest. 

\smallskip
The second difficulty arises from the fact that the variational inequality associated with the minimization of $\mE$ involves linear and bilinear forms defined in terms of the Fourier transform.  We cope with such forms using  a penalization approach strongly inspired by Kinderlehrer and Stampacchia~\cite[Chapter IV, Section 5]{kinderlehrer2000introduction}. This leads to the  study of nonlocal penalized problems different from the ones considered in \cite[Chapter IV, Section 5]{kinderlehrer2000introduction}. This requires a careful adaptation of the approach  in  \cite{kinderlehrer2000introduction}. An advantage of our approach when compared to the one in \cite{carrillo2016regularity} is that it does not rely on continuity and thus holds in any dimension $N\ge 3$. A drawback is that it only leads to  $L^2$ estimates, while one would expect $L^p$-regularity for some $p>2$ (depending on $N$). 

\smallskip
Our paper is organized as follows. In Section \ref{lemmas}, we gather a few auxiliary results and preliminary regularity results. In Section \ref{sec var}, we prove that  the potential $u=W\ast \mu$ satisfies a variational inequality. Finally, Section \ref{sec reg} is devoted to the proof of a regularity result on the solution of this variational inequality, and of Theorem \ref{main}.

\section{First regularity results}\label{lemmas}
In the following, we denote by $\mu$ the minimizer of $\mE$ and, with no loss of generality, assume that $\supp \mu \subset B_1$, where, for a non-negative measure $\nu$, 
\bes
\supp\nu \coloneq \{ x \in \R^N\text{;}\, \nu(B(x,r))>0\text{,} \ \fo r>0 \}\text{.}
\ees
 We will only consider spaces of real-valued functions and denote $L^2=L^2(\R^N; \, \R)$, and similarly for $L^p$, $\mathring{H}^1$, etc.
We will denote by $C$ any positive constant depending only on $N$ and $W$. We will sometimes use the notation \enquote{$A\lesssim B$} (respectively \enquote{$A\approx B$}) to indicate that $A\leq C B$ (respectively $\frac{1}{C} B\leq A \leq C B$) for such a constant $C$.

We start with a few standard results (Lemmas \ref{obs eq}--\ref{almost c0}). The first result is in the spirit of \cite[Theorem 3.1]{mora2019equilibrium}
\begin{lemma}
	\label{obs eq}
	$\mu$ satisfies
	\begin{align}
	  &\label{el1}W\ast \mu + 1/2 V  \geq \alpha \coloneq \mE(\mu) - 1/2 \int V \intd\mu\text{,} \ \text{Lebesgue a.e.,}
	\\&  \label{el2} W\ast \mu + 1/2 V  = \alpha\text{,} \ \mu-\text{a.e.,}
	\\ &\label{el3}W\ast \mu(x) + 1/2 V(x)  \leq  \alpha\text{,} \ \fo x \in \supp\mu\text{.}
	\end{align}
\end{lemma}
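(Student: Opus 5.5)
The plan is to compare $\mu$ with perturbed competitors and to use the first variation of the quadratic functional $\mE$. Since $W$ is positive, $W\ast\mu$ is well defined everywhere with values in $[0,\infty]$. Since $\mE(\mu)<\infty$ and $V\ge 0$, we have $\int W\ast\mu\intd\mu<\infty$, so $W\ast\mu<\infty$ $\mu$-a.e. For any $\nu\in\mathscr P(\R^N)$ with $\mE(\nu)<\infty$ and $t\in[0,1]$, set $\mu_t=(1-t)\mu+t\nu$. Expanding,
\begin{equation*}
\mE(\mu_t)=\mE(\mu)+2t\Big(\int (W\ast\mu+\tfrac12 V)\intd\nu-\int(W\ast\mu+\tfrac12V)\intd\mu\Big)+t^2 I(\nu-\mu)\text{,}
\end{equation*}
where $I(\sigma)=\iint W(x-y)\intd\sigma\intd\sigma$. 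Since $\int(W\ast\mu+\frac12V)\intd\mu=\mE(\mu)-\frac12\int V\intd\mu=\alpha$, minimality gives, after dividing by $t$ and letting $t\to0$,
\begin{equation*}
\int (W\ast\mu+\tfrac12V)\intd\nu\ge\alpha\text{.}
\end{equation*}
This holds provided all the terms are finite, which is true for instance when $\nu$ has a bounded, compactly supported density. Here I use that $W\in L^1_{\loc}$ with $W\lesssim|x|^{2-N}$, because $\Psi$ is continuous on the compact sphere.

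To get \eqref{el1}, I take $\nu=\mathbbm 1_{B(x_0,r)}/|B(x_0,r)|$. This gives that the ball average of $W\ast\mu+\frac12V$ over $B(x_0,r)$ is at least $\alpha$. Letting $r\to0$ at Lebesgue points yields the inequality a.e. Note that $W\ast\mu\in L^1_{\loc}$, since $W\in L^1_{\loc}$, $\mu$ is compactly supported, and $W$ decays at infinity. Also $V$ is continuous.

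For \eqref{el2}, I argue by contradiction. Suppose the set $A=\{W\ast\mu+\frac12V<\alpha-\delta\}$ has positive $\mu$-measure for some $\delta>0$. By \eqref{el1} this set is Lebesgue-null, yet $\mu(A)>0$. Now test $\nu=\mu|_{A}/\mu(A)$; this has finite energy since $\nu\le\mu/\mu(A)$. The first variation then gives $\int(\cdots)\intd\nu\ge\alpha$, while the left side is at most $\alpha-\delta$, a contradiction. So $W\ast\mu+\frac12V\ge\alpha$ $\mu$-a.e. Combined with $\int(W\ast\mu+\frac12V)\intd\mu=\alpha$, this forces equality $\mu$-a.e. (The competitor $\nu$ is admissible since $I(\nu)\le I(\mu)/\mu(A)^2<\infty$, using $W>0$.)

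For \eqref{el3}, the key input is lower semicontinuity of $W\ast\mu$. Since $W$ is continuous off $0$ and positive, with $W(x)\to+\infty$ as $x\to 0$ (as $\Psi>0$), it is lower semicontinuous as an $[0,\infty]$-valued function, so Fatou gives that $W\ast\mu$ is lsc. Fix $x\in\supp\mu$. For every $r>0$ we have $\mu(B(x,r))>0$, so by \eqref{el2} there is $y_r\in B(x,r)$ with $W\ast\mu(y_r)+\frac12V(y_r)=\alpha$. Letting $r\to0$ and using lower semicontinuity together with the continuity of $V$ gives $W\ast\mu(x)+\frac12V(x)\le\liminf(\cdots)=\alpha$.

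The main obstacle I foresee is justifying finiteness and the expansion of $\mE(\mu_t)$. The cross term $\iint W\intd\mu\intd\nu$ must be finite, and $I(\nu-\mu)$ must be finite (possibly of either sign; nonnegativity via \eqref{H2} is not needed since $t\to0$). Both follow from $I(\mu),I(\nu)<\infty$ and positivity, using the bound $\iint W\intd\mu\intd\nu\le\frac12(I(\mu)+I(\nu))$. This bound comes from positive definiteness, $\mF(W)\ge0$, as in \eqref{H2}, which requires a small approximation argument for measures. Alternatively, for bounded-density $\nu$ the cross term is directly finite since $W\ast\nu$ is bounded; in the restriction case it is bounded by $I(\mu)/\mu(A)$.
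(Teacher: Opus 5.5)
Your proof is correct and is essentially the argument the paper invokes by citation. The paper obtains \eqref{el1}--\eqref{el2} from the first variation of $\mathscr{E}$ along $(1-t)\mu+t\nu$ (the proof of \cite[Theorem 3.1]{mora2019equilibrium}), and \eqref{el3} from \eqref{el2} combined with lower semicontinuity of $W\ast\mu+\frac12 V$; your choice of competitors (normalized ball indicators plus Lebesgue differentiation, and the restriction $\mu_{|A}/\mu(A)$, whose energy is bounded by $I(\mu)/\mu(A)^2$ since $W>0$) makes this self-contained, with no quasi-everywhere/capacity statement or positive-definiteness needed, and your Fatou argument stands in for the citation of \cite[Theorem 4]{balague2013dimensionality}.
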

\begin{proof}
	Equations \eqref{el1} and \eqref{el2} follow from the proof of \cite[Theorem 3.1]{mora2019equilibrium}. Equation \eqref{el3} is a consequence of \eqref{el2}, thanks to the l.s.c.\ of $W\ast\mu +1/2V$, as shown in \cite[Theorem 4]{balague2013dimensionality}. 
\end{proof}
The next result is straightforward and we omit its proof.
\begin{lemma}\label{no dirac}
	Let $\nu$ be a non-negative measure such that
	\bes
	\int_{\R^N} W\ast \nu \intd \nu<\infty\text{.}
	\ees
	Then, $\nu (\{x\})=0$ for each $x\in \R^N$.
\end{lemma}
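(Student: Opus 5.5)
The plan is to argue by contradiction, isolating the diagonal contribution of an atom. Suppose $\nu(\{x_0\})=a>0$ for some $x_0\in\R^N$. Since $\nu$ is non-negative and $W>0$ everywhere, because $\Psi$ is positive, the integrand $W(x-y)$ is non-negative on $\R^N\times\R^N$. Reading $\int W\ast\nu\intd\nu$ as the double integral $\iint W(x-y)\intd\nu(y)\intd\nu(x)$, Tonelli's theorem therefore lets us bound it from below by its restriction to $\{x_0\}\times\{x_0\}$.

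That restriction equals $W(0)\,a^2$. The key observation is that $W(0)=+\infty$: hypothesis \eqref{H1} gives $W(x)\ge \min_{\mathbb{S}^{N-1}}\Psi\,\verti{x}^{2-N}$, and $\min\Psi>0$ by continuity, positivity and compactness of $\mathbb{S}^{N-1}$. Since $N\ge3$, the right-hand side tends to $+\infty$ as $x\to0$, so the natural (lower semicontinuous) value of $W$ at the origin is $+\infty$.

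The only point requiring care is the convention for $W(0)$. If one prefers not to assign it a value, I would argue via $W\ast\nu(x_0)$ directly. We have $W\ast\nu(x_0)\ge \int_{B(x_0,r)}W(x_0-y)\intd\nu(y)$, and on the set $\{x_0\}$ the kernel is interpreted as $\lim_{z\to0}W(z)=+\infty$. Equivalently, we can use monotone convergence with the truncations $W_k=\min(W,k)$. Then
\bes
\int W_k\ast\nu \intd\nu\ \ge\ k\,a^2 \longrightarrow \infty,
\ees
while $\int W_k\ast\nu\intd\nu\le\int W\ast\nu\intd\nu<\infty$, which is a contradiction. The truncation is the robust route and avoids any ambiguity about the value of $W$ at $0$.

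There is no real obstacle here. The argument uses only the positivity of $\Psi$, the fact that $N-2>0$, and non-negativity of the integrand, so that Tonelli's theorem and monotone convergence apply.
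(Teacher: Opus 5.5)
Your argument is correct and is the straightforward one the paper has in mind when it omits the proof: with the convention $W(0)=+\infty$ (the l.s.c.\ value, since $W(z)\ge \min\Psi\,|z|^{2-N}$), an atom $\nu(\{x_0\})=a>0$ gives $W\ast\nu(x_0)\ge W(0)\,a=+\infty$ on a set of $\nu$-measure $a$, so $\int W\ast\nu\,\mathrm{d}\nu=+\infty$. One correction to your last paragraph: the truncation does not avoid the convention, because $\int W_k\ast\nu\,\mathrm{d}\nu\le\int W\ast\nu\,\mathrm{d}\nu$ uses $W_k(0)=k\le W(0)$; with any finite value of $W(0)$ the measure $\delta_0$ would be a counterexample, so the lemma itself presupposes $W(0)=+\infty$.
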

We next prove a first regularity result, which is a straightforward consequence of Landkof \cite[Lemma 1.7]{landkof1972foundations}.
We set  $\displaystyle E(x) \coloneq {1}/{\verti{x}^{N-2}}$.

\begin{lemma}\label{fun H1}
Let $\nu$ be a non-negative and finite measure such that 
\bes
\int_{\R^N} W\ast \nu \intd \nu<\infty\text{.}
\ees
Then, $W\ast \nu \in \mathring{H}^1\coloneq \{u \in L^{2N/(N-2)}\text{;} \, \nabla u \in L^2 \}$. 
\end{lemma}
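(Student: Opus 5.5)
The plan is to compare $W$ with the Coulomb kernel $E$ and reduce to the classical statement in Landkof \cite[Lemma 1.7]{landkof1972foundations}. By \eqref{H1}, since $\Psi$ is continuous and positive on the compact sphere $\mathbb{S}^{N-1}$, we have $\min\Psi\, E\le W\le \max\Psi\, E$ pointwise. Hence
\bes
\int_{\R^N} E\ast\nu\intd\nu \le \frac{1}{\min\Psi}\int_{\R^N} W\ast\nu\intd\nu<\infty\text{.}
\ees
Thus $\nu$ has finite Coulomb energy. Landkof's lemma (finite-energy measures have potential with $\nabla(E\ast\nu)\in L^2$ and $\|\nabla (E\ast\nu)\|_2^2\approx \int E\ast\nu\intd\nu$) then gives $E\ast\nu\in\mathring{H}^1$. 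Equivalently, in Fourier terms this reads $\int |\hat\nu(\xi)|^2|\xi|^{-2}\intd\xi<\infty$, which is the identity $\int E\ast\nu\intd\nu = c_N\int |\hat\nu|^2|\xi|^{-2}$.

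Next I transfer this to $W\ast\nu$ via \eqref{H2}. Formally $\mathscr{F}(W\ast\nu)=m(\xi/|\xi|)|\xi|^{-2}\hat\nu$, so
\bes
|\xi|^2\,\abs{\mathscr{F}(W\ast\nu)}^2 = m^2\,\frac{\abs{\hat\nu}^2}{|\xi|^2}\le m_1^2\,\frac{\abs{\hat\nu}^2}{|\xi|^2}\in L^1\text{.}
\ees
Therefore $\nabla(W\ast\nu)\in L^2$. To make this rigorous, I would regularize $\nu_\varepsilon=\nu\ast\rho_\varepsilon$ with a standard mollifier. This produces smooth, finite measures whose Fourier transform is $\hat\nu\hat\rho_\varepsilon$, so the formula above holds in $\mathscr{S}'$, with $W\ast\nu_\varepsilon$ a function decaying like $|x|^{2-N}$. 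The $\mathring{H}^1$ bounds are then uniform in $\varepsilon$ by dominated convergence on the Fourier side. Sobolev's inequality bounds $\|W\ast\nu_\varepsilon\|_{2N/(N-2)}\lesssim\|\nabla (W\ast\nu_\varepsilon)\|_2$, and the family is Cauchy in $\mathring{H}^1$ because $\hat\rho_\varepsilon\to1$ boundedly.

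Finally I identify the limit with $W\ast\nu$. We have $W\ast\nu_\varepsilon=(W\ast\nu)\ast\rho_\varepsilon\to W\ast\nu$ in $L^1_{\loc}$, since $W\ast\nu\in L^1_{\loc}$ for a finite measure ($W\lesssim E$ is locally integrable with tail bounded by $\nu(\R^N)$ times a constant at distance). The $\mathring{H}^1$ limit and the $L^1_{\loc}$ limit must then coincide a.e.

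I expect the main obstacle to be this justification of the Fourier multiplier formula. The issue is that $\mathscr{F}(W)$ may fail to be $L^1_{\loc}$ for $N=3,4$, and $\nu$ is only a measure, so the product $\mathscr{F}(W)\hat\nu$ must be interpreted carefully. Mollification, which turns $\hat\nu_\varepsilon$ into a Schwartz-like, rapidly decaying function, is the step I would use to resolve this. The pointwise bounds on $m$ from \eqref{H2} then make all the estimates uniform.
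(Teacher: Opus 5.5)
Your proposal is correct and follows essentially the paper's route: Landkof's lemma with the pointwise comparison $W\approx E$, mollification $\nu_\varepsilon=\nu\ast\rho_\varepsilon$, and the bound $m\le m_1$ on the Fourier side. The only difference is that you get strong convergence in $\mathring{H}^1$ and apply Sobolev's inequality to the approximants, whereas the paper passes the gradient bound to the limit and then applies the Sobolev embedding directly to $W\ast\nu$, using that $W\ast\nu\in L^1+L^p$ vanishes at infinity.

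Two small corrections. First, for a general finite $\nu$ that is not compactly supported, $W\ast\nu_\varepsilon$ need not decay like $|x|^{2-N}$; it does vanish at infinity (it lies in $L^1+L^p$), and that is all Sobolev's inequality requires. Second, under \eqref{H2} the function $m(\xi/|\xi|)/|\xi|^2$ is already in $L^1_{\mathrm{loc}}$, so the obstacle you anticipate for $N=3,4$ does not actually arise.
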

\begin{proof}
 We have 
\be\label{land}
\int_{\R^N} \verti{\nabla(E\ast \nu)}^2 \intd{x} \leq C\int_{\R^N} E\ast \nu \intd{\nu}  \leq C \int_{\R^N}W\ast \nu \intd{\nu}<\infty\text{,}
\ee
where the first inequality follows from \cite[Lemma 1.7]{landkof1972foundations}, and the second inequality is obtained thanks to \eqref{H1}. 
The conclusion follows by approximation of $\nu$ with the convolutions $\nu_{\ve}\coloneq \nu \ast \rho_{\ve}$, where $\rho$ is a standard mollifier. Note that, since $W\in \mathscr{S}'$ and $\nu_{\ve} \in \mathscr{S}$, we have $\mathscr{F}(W\ast\nu_{\ve})=\mathscr{F}(W) \mathscr{F}(\nu_{\ve})$ in the sense of tempered distributions and thus $\mathscr{F}(W\ast\mu_{\ve})$ is a function. For each $\ve>0$, we have
\bes
\ba
 \frac{1}{(2\pi)^N}\int_{\R^N}\verti{\xi}^2\verti{\mathscr{F}(W\ast \nu_{\ve})(\xi)}^2 \intd\xi &= \int_{\R^N}  \verti{\xi}^2\verti{\mathscr{F}(W)(\xi)}^2 \verti{\mathscr{F}(\nu_{\ve})(\xi)}^2  \intd\xi \\ &\leq C \int_{\R^N}\verti{\xi}^2\verti{\mathscr{F}(E)(\xi)}^2 \verti{\mathscr{F}(\nu_{\ve})(\xi)}^2  \intd\xi 
\\ &= (2\pi)^N \int_{\R^N}\verti{\nabla(E\ast \nu_{\ve})}^2 \intd{x}
\\ & \leq (2\pi)^N \int_{\R^N}\verti{\nabla(E\ast \nu)}^2 \intd{x}\text{.}
\ea
\ees
Here, we rely on the fact that $m$ (defined in \eqref{H2}) is bounded for the first inequality. The last identity follows from Plancherel's theorem, while the last inequality is a standard convolution estimate. By \eqref{land}, we thus find that $\nabla(W\ast\nu_{\ve})$ is bounded in $L^2$ and this implies that $\nabla(W\ast \nu)$ is in $L^2$, since $W\ast \nu_{\ve} \to W\ast \nu$ in $\mathscr{S}'$. 

In order to show that $W\ast \nu$ is in $L^{2N/(N-2)}$, we note that $W= W\mathbbm{1}_{B_1}+W\mathbbm{1}_{(B_1)^\mathrm{c}} \in L^1+ L^{p}$ for any $p>N/(N-2)$. Since $\nu$ is a finite measure, we also have $W\ast \nu \in  L^1+ L^{p}$. Hence, $\{x \in \R^N; \, W\ast \nu(x) >t\}$ is of finite Lebesgue measure for each $t>0$ ($W\ast \mu$ is \enquote{vanishing at infinity}) and the Sobolev embedding (see, e.g., Leoni \cite[Theorem 12.4]{leoni2017first}) yields $W\ast \nu \in L^{2N/(N-2)}$. 
\end{proof}

	

\begin{lemma}(\cite[Theorem 1.4]{landkof1972foundations}) \label{Enu}
	Let $\nu$ be a non-negative measure. Then 
	\bes
	E\ast \nu(x) \geq \fint_{B(x,r)} E\ast \nu (z) \intd{z}\text{,} \ \fo x \in \R^N\text{,} \ r>0\text{.} 
	\ees
\end{lemma}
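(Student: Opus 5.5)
The plan is to reduce to the Dirac mass by linearity and then use the superharmonicity of $E$ itself. Write $E\ast\nu(x)=\int E(x-y)\intd\nu(y)$, with values in $[0,\infty]$. By Tonelli's theorem, since everything is non-negative,
\bes
\fint_{B(x,r)} E\ast\nu(z)\intd z=\int_{\R^N}\Big(\fint_{B(x,r)}E(z-y)\intd z\Big)\intd\nu(y)\text{.}
\ees
It therefore suffices to prove the pointwise inequality
\bes
\fint_{B(x,r)}E(z-y)\intd z\le E(x-y)\text{,}\quad \fo x,y\in\R^N\text{, } r>0\text{.}
\ees
Integrating this in $\intd\nu(y)$ gives the claim, including the case where $E\ast\nu(x)=+\infty$. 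After translating, we need $\fint_{B(a,r)}E\le E(a)$ for every $a\in\R^N$, where $E(0)=+\infty$ makes the case $a=0$ trivial.

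For the pointwise inequality, first compute spherical means. For $\rho>0$ one has
\bes
\fint_{\partial B(a,\rho)}E\intd\sigma=\min\{|a|^{2-N},\rho^{2-N}\}\le E(a)\text{.}
\ees
This is the classical mean value computation for the Newtonian potential. If $\rho<|a|$, then $E$ is harmonic on a neighbourhood of $\overline{B(a,\rho)}$, so the mean equals $E(a)$. If $\rho>|a|$, the symmetry identity $\fint_{\partial B(a,\rho)}E=\fint_{\partial B(0,\rho)}E(\cdot-a)$ shows the mean equals the value at $0$ of the harmonic function $w\mapsto\fint_{\partial B(0,\rho)}|w-\zeta|^{2-N}\intd\sigma(\zeta)$ on $B_\rho$, which is $\rho^{2-N}$. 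The case $\rho=|a|$ is a null set of radii and is irrelevant for the solid average. Integrating in polar coordinates, $\fint_{B(a,r)}E=\frac{N}{r^N}\int_0^r\rho^{N-1}\min\{|a|^{2-N},\rho^{2-N}\}\intd\rho\le E(a)$.

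The only step needing care is justifying the spherical mean formula, in particular the case $\rho>|a|$. If one prefers to avoid it, an alternative is to observe that $E$ is superharmonic in the distributional sense, since $-\Lap E$ is a positive multiple of $\delta_0$. Then mollify, $E_\ve=E\ast\rho_\ve$, which is smooth and superharmonic, so its ball means satisfy $\fint_{B(a,r)}E_\ve\le E_\ve(a)$. Since $E$ is locally integrable, the left side converges as $\ve\to0$. One then passes to the limit using $E_\ve(a)\to E(a)$ for $a\neq0$, which holds by continuity of $E$ away from $0$. Either route is routine; I expect no real obstacle beyond bookkeeping with the value $+\infty$, which Tonelli handles.
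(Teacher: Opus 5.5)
Your proof is correct. The paper gives no argument of its own and simply cites Landkof; your route (Tonelli to reduce to a point mass, then the mean-value inequality $\fint_{B(a,r)}E\le E(a)$ for the kernel itself) is the standard proof of this classical fact.

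If you keep the spherical-mean version rather than the mollification one, one step needs to be stated. You should say that $h(w)=\fint_{\partial B(0,\rho)}|w-\zeta|^{2-N}\intd\sigma(\zeta)$ is radial as well as harmonic in $B_\rho$. Then the mean value property of $h$ over $\partial B(0,|a|)$ gives $h(a)=h(0)=\rho^{2-N}$. As written, you only evaluate $h$ at $0$, which is not where your spherical mean $h(a)$ is taken.
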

%
%
We now turn to an important auxiliary result in the vein of Evans' theorem (see, e.g., Caffarelli \cite[Theorem 1]{caffarelli1998obstacle}). Its conclusion, combined with Lemma \ref{Linfty}, leads to a variational inequality that will allow us to complete the proof of Theorem \ref{main}.
\begin{lemma}\label{almost c0} We have
	\begin{align}
		& \label{c1}W\ast\mu \ \text{is continuous in} \ (\supp \mu)^\mathrm{c}\text{.}
	\\ &\label{c2}W\ast \mu \ \text{is continuous at every} \ x \in \supp \mu \ \text{such that} \  W\ast\mu (x) +1/2 V(x)=\alpha\text{.}	
	\\
	&\label{c3h}W\ast \mu \ \text{is continuous} \ \mu-\text{a.e.\ in}\ \supp \mu\text{.}
	\end{align}
\end{lemma}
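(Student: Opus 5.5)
We prove the three statements in turn. The common tools are the lower semicontinuity of $W\ast\mu$ and the bounds \eqref{el1}, \eqref{el3} from Lemma \ref{obs eq}, combined with the continuity of $V$.

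\emph{Proof of \eqref{c1}.} Let $x_0\notin\supp\mu$. Then there is $r>0$ with $\mu(B(x_0,2r))=0$. For $x\in B(x_0,r)$ we have
\bes
W\ast\mu(x)=\int_{\R^N\setminus B(x_0,2r)}W(x-y)\intd\mu(y)\text{.}
\ees
On this domain of integration $\verti{x-y}\ge r$. Since $\Psi$ is continuous on $\mathbb{S}^{N-1}$, the kernel $W$ is continuous and bounded on $\{\verti{z}\ge r\}$. Because $\mu$ is a finite measure, dominated convergence shows that $W\ast\mu$ is continuous on $B(x_0,r)$.

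\emph{Proof of \eqref{c2}.} The key observation is that $W\ast\mu$ is lower semicontinuous on $\R^N$. Indeed, $W$ is positive and lower semicontinuous (with value $+\infty$ at $0$), so Fatou's lemma gives lower semicontinuity of $W\ast\mu$. It therefore suffices to prove upper semicontinuity at a point $x_0\in\supp\mu$ with $W\ast\mu(x_0)+\tfrac12V(x_0)=\alpha$. We split the limsup over $x\to x_0$ according to whether $x\in\supp\mu$.

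For $x\in\supp\mu$, \eqref{el3} gives $W\ast\mu(x)\le\alpha-\tfrac12V(x)$. By continuity of $V$ the right-hand side tends to $\alpha-\tfrac12V(x_0)=W\ast\mu(x_0)$, which is the desired bound.

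The main obstacle is $x\notin\supp\mu$, where no upper bound on $W\ast\mu$ is available directly. Here $W\ast\mu$ is not superharmonic, so we cannot argue as in the Coulomb case. We plan a Evans/Caffarelli-type argument that transfers the bound from the support to nearby points. Fix $\ve>0$ and $\delta>0$ small, and split $\mu=\mu\llcorner B(x_0,\delta)+\mu\llcorner B(x_0,\delta)^\mathrm{c}$.
\begin{itemize}
\item The far part contributes a function continuous near $x_0$, exactly as in \eqref{c1}.
\item The near part $u_\delta:=W\ast(\mu\llcorner B(x_0,\delta))$ should have a small value at $x_0$ once $\delta$ is small, by dominated convergence. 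This uses $W\ast\mu(x_0)<\infty$ together with Lemma \ref{no dirac}.
\end{itemize}
We then need to show that $u_\delta(x)$ is also small for $x\notin\supp\mu$ close to $x_0$. To do this, pick $y\in\supp\mu$ nearest to $x$, so that $\verti{x-y}\le\verti{x-x_0}$. For $z\in\supp\mu$ we have $\verti{z-x}\ge\verti{x-y}$, and hence $\verti{z-y}\le 2\verti{z-x}$. Combined with \eqref{H1} and the positivity of $\Psi$, this gives
\bes
W(x-z)\le C\,W(y-z)\qquad\text{for } z\in\supp\mu\text{,}
\ees
with $C$ depending only on $\min\Psi$ and $\max\Psi$. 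Consequently $u_\delta(x)\le C\,u_\delta(y)$. It remains to bound $u_\delta(y)$ at points $y\in\supp\mu$ near $x_0$. By \eqref{el3}, $u_\delta(y)\le \alpha-\tfrac12V(y)-(\text{far part at }y)$. By continuity of $V$ and of the far part, this tends to $\alpha-\tfrac12V(x_0)-(\text{far part at }x_0)=u_\delta(x_0)$, which is small.

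Putting the pieces together, $\limsup_{x\to x_0}W\ast\mu(x)\le W\ast\mu(x_0)+C\ve$. Since $\ve$ is arbitrary, $W\ast\mu$ is upper semicontinuous at $x_0$, hence continuous there.

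\emph{Proof of \eqref{c3h}.} By \eqref{el2}, $\mu$-a.e.\ point $x\in\supp\mu$ satisfies $W\ast\mu(x)+\tfrac12V(x)=\alpha$. At each such point, \eqref{c2} applies, so $W\ast\mu$ is continuous $\mu$-a.e.\ in $\supp\mu$.
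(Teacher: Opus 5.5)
Your proof is correct, and its architecture matches the paper's. On $\supp\mu$ you use lower semicontinuity together with \eqref{el3} and the continuity of $V$. For points $x\notin\supp\mu$ you split $\mu$ into a near part on $B(x_0,\delta)$ and a far part. You then control the near potential at $x$ by its value at a nearest support point $y$. There, \eqref{el3}, the identity $W\ast\mu(x_0)+\tfrac12V(x_0)=\alpha$ and Lemma \ref{no dirac} make it small.

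Where you genuinely differ is in how you get the key comparison $u_\delta(x)\lesssim u_\delta(y)$. The paper goes through the Newtonian kernel in five steps:
\begin{enumerate}
\item $W\lesssim E$;
\item the mean value identity for the harmonic function $E\ast\mu_{|B(x,\delta)}$ on $B(x_n,r_n)$;
\item the inclusion $B(x_n,r_n)\subset B(y_n,2r_n)$;
\item the super-mean-value inequality of Lemma \ref{Enu};
\item finally $E\lesssim W$.
\end{enumerate}
You get it pointwise in $z$. Since $|x-y|=\text{dist}(x,\supp\mu)\le|x-z|$, one has $|z-y|\le 2|z-x|$. Hence $W(x-z)\le 2^{N-2}(\max\Psi/\min\Psi)\,W(y-z)$ for every $z\in\supp\mu$, and you integrate this against $\mu_{|B(x_0,\delta)}$.

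Your route is shorter and gives an explicit constant. It uses no potential theory of $-\Delta$, such as harmonicity of $E\ast\mu$ off the support or superharmonicity. It also works verbatim for any kernel $\Psi(x/|x|)|x|^{-\beta}$ with $\beta>0$, for instance the fractional kernels $W_s$ mentioned at the end of the paper. In that setting the paper's harmonic mean-value step would have to be replaced by something else. The same one-line inequality also proves Lemma \ref{Linfty} directly. The paper's route mainly has the merit of staying parallel to the classical Evans-type argument in the Coulomb case and reusing Lemma \ref{Enu}, which it needs anyway in the appendix.

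Your self-contained Fatou argument for lower semicontinuity, in place of the citation to Balagu\'e et al., is also fine.
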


In the proof of Lemma \ref{almost c0} (ii), we will use the fact that $E\ast \mu$ is harmonic (in the classical sense) in $(\supp\mu)^\mathrm{c}$ (which is an open set), and therefore 
\be\label{mean val eq}
E\ast \mu(x) = \fint_{B(x,\text{dist}(x,\supp\mu))} E\ast\mu(z) \intd{z}
\ee 
for each $x \in (\supp\mu)^\mathrm{c}.$
\begin{proof}[Proof of Lemma \ref{almost c0}]
 \textit{Proof of	\eqref{c1}.}  For each $x \in (\supp\mu)^\mathrm{c}$ and each sequence $x_n \to x$, there exists $C_x<\infty$, such that, for $n$ sufficiently large,
	\bes
	0 \leq W(x_n-y) \leq C_x <\infty\text{,} \ \fo y \in \supp \ \mu\text{.} 
	\ees
	This inequality, combined with the fact that $W$ is continuous in $\R^N \setminus \{0\}$, yields the desired conclusion by dominated convergence.
	
\smallskip
\noindent
\textit{Proof of
	\eqref{c2}.} Let \(x\in\supp\ \mu\) be such that $W\ast\mu (x) +1/2 V(x)=\alpha.$ Let $x_n \to x$. We first observe that, by l.s.c.\ of $W\ast \mu$, we have 
	\be\label{f1}
	\lim_n W\ast \mu (y_n) = W\ast \mu(x)\text{,}
	\ee
	for each $(y_n) \subset \supp \mu$ converging to $x$. Indeed, we have
	\bes
	\ba
	\alpha-1/2V(x)=W\ast \mu (x) \leq \liminf_n W\ast \mu (y_n) &\leq \limsup_n W\ast \mu (y_n) \\ &\leq \lim_n \left(\alpha -1/2V(y_n)\right) = \alpha-1/2V(x)\text{,}
	\ea
	\ees
	using the l.s.c.\ of $W\ast \mu$ for the first inequality and \eqref{el3}.
	
	Thus, in order to prove \eqref{c2}, we only need to consider $x_n \to x$ with $(x_n) \subset (\supp \mu)^\mathrm{c}$. It suffices to check that 
	\be\label{bad part}
	\fo \ve >0\text{,} \ \exists \ \delta>0\text{, } \int_{B(x,\delta)} W(x_n- y) \intd\mu(y) <\ve\text{,} \ \fo n \ \text{sufficiently large}\text{.}
	\ee
	Indeed, granted \eqref{bad part}, we have that, for each $\ve>0$, there exists $\delta>0$ such that
	\bes
	\ba
	\verti{W\ast\mu(x_n) - W\ast\mu (x)}  &\leq \verti{\int_{B(x,\delta)^\mathrm{c}} W(x_n-y) \intd\mu(y) - \int_{B(x,\delta)^\mathrm{c}} W(x-y) \intd\mu(y) }
	\\ & \ \ \ \ + 2\ve\text{,}
	\ea
	\ees
	for $n$ sufficiently large. On the other hand,  for $n$ sufficiently large, we have $x_n \in B(x,\delta/2)$ and thus $\verti{x_n-y}\geq \delta/2$, for each $y \in B(x,\delta)^\mathrm{c}$. Using the dominated convergence theorem, we therefore have
	\be \label{to pass}
	\lim_n \verti{\int_{B(x,\delta)^\mathrm{c}} W(x_n-y) \intd\mu(y) - \int_{B(x,\delta)^\mathrm{c}} W(x-y) \intd\mu(y)} \to 0\text{.}
	\ee
	Hence, passing to the limits in \eqref{to pass}, we find 
	\bes
	\limsup_n \verti{W\ast\mu(x_n) - W\ast\mu (x)} \leq 2\ve\text{,} \ \fo \ve>0\text{,}
	\ees
	and this implies \eqref{c2}.
	
\smallskip
\noindent
\textit{Proof of	 \eqref{bad part}.} Let $\ve>0$. By dominated convergence, there exists $\delta>0$ sufficiently small such that
	\be\label{delta ch}
	\int_{B(x,\delta)} W(x-y) \intd\mu(y) < \ve\text{.}
	\ee
	Indeed, by \eqref{el3}, we have $W(x-\cdot) \in L^1(\mu)$. On the other hand, $\mathbbm{1}_{B(x,\delta)}(y) \to 0$ for $\mu$-a.e.\ $y \in \R^N$ since $\mu(\{x\})=0$ by Lemma \ref{no dirac}, which implies \eqref{delta ch}.
	
	For each $n \in \N$, set $r_n\coloneq \text{dist}(x_n,\supp \mu)$ and let $y_n\in \supp \mu$ be a point such that $\verti{y_n-x_n}=r_n$. We have
	\be\label{estimates}
	\ba
	\int_{B(x,\delta)} W(x_n-y)\intd\mu(y) &\overset{(a)}{\lesssim}  \int_{B(x,\delta)} E(x_n-y) \intd\mu(y) = E\ast\mu_{| B(x,\delta)}(x_n) 
	\\ & \overset{(b)}{=} \fint_{B(x_n,r_n)} (E\ast\mu_{|B(x,\delta)})(z) \intd{z}
	\\ &\leq 2^N \fint_{B(y_n,2r_n)} (E\ast\mu_{|B(x,\delta)})(z) \intd{z}
	\\ &\overset{(c)}{\lesssim}  (E\ast\mu_{|B(x,\delta)})(y_n)
	\\ & \overset{(d)}{\lesssim} \int_{B(x,\delta)} W(y_n-y) \intd\mu(y) 
	\\ & = W\ast\mu(y_n) - \int_{B(x,\delta)^\mathrm{c}} W(y_n - y) \intd\mu(y)\text{.}
	\ea
	\ee
	Here : (a) and (d) follow from \eqref{H1}; (b) follows from the fact that $E\ast \mu_{|B(x,\delta)}$ is harmonic and continuous outside of $\supp\mu$; (c) is a consequence of Lemma \ref{Enu}. 
	By \eqref{f1}, we have 
	\be\label{by f1} \lim_n W\ast\mu(y_n) = W\ast\mu(x)= \int_{B(x,\delta)} W(x-y) \intd\mu(y) + \int_{B(x,\delta)^\mathrm{c}} W(x-y) \intd\mu(y)\text{.}
	\ee
	On the other hand, we have 
	\be\label{by cvd}
	\verti{\int_{B(x,\delta)^\mathrm{c}} W(y_n-y) \intd\mu(y) - \int_{B(x,\delta)^\mathrm{c}} W(x-y) \intd{\mu(y)}} \to 0\text{,} \ \text{as} \ n \to \infty\text{.}
	\ee
	by dominated convergence, as in \eqref{to pass}. 
	
	Combining \eqref{estimates}, \eqref{by f1} and \eqref{by cvd}, we find
	\bes
	\ba
	\int_{B(x,\delta)} W(x_n-y) \intd\mu(y) &\lesssim W\ast\mu(x) - \int_{B(x,\delta)^\mathrm{c}} W(y_n - y) \intd\mu(y) +o_{n\to \infty}(1)
	\\ &=  \int_{B(x,\delta)} W(x-y) \intd\mu(y) + o_{n\to \infty}(1)\text{,}
	\ea
	\ees
	which yields \eqref{bad part}, by \eqref{delta ch}.
	\qedhere
	
\end{proof}
Similar arguments lead to the following result.
\begin{lemma}\label{Linfty}
	$W\ast\mu $ is bounded (everywhere).
\end{lemma}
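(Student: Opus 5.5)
We show that $W\ast\mu$ is bounded on each of three regions: on $\supp\mu$ (via \eqref{el3}), far from $\supp\mu\subset B_1$ (trivially), and near but off $\supp\mu$ (via the mean-value comparison already used in the proof of \eqref{c2}). Throughout we use $W\ge 0$ and the comparison $W\approx E$, which follows from \eqref{H1} since $\Psi$ is continuous and positive on the compact sphere.

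\emph{Step 1: on the support.} For $x\in\supp\mu$, \eqref{el3} gives $W\ast\mu(x)\le \alpha-\tfrac12 V(x)\le \alpha$, because $V\ge 0$ by \eqref{hv}. Hence $0\le W\ast\mu\le \alpha$ on $\supp\mu$. We also have $E\ast\mu\lesssim W\ast\mu\le\alpha$ there.

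\emph{Step 2: far from the support.} If $\operatorname{dist}(x,\supp\mu)\ge 1$, then $W(x-y)\le \max\Psi$ for every $y\in\supp\mu$. Since $\mu$ is a probability measure, $W\ast\mu(x)\le\max\Psi$.

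\emph{Step 3: near but off the support.} Let $x\in(\supp\mu)^\mathrm{c}$, set $r\coloneq\operatorname{dist}(x,\supp\mu)$, and choose $y\in\supp\mu$ with $\verti{x-y}=r$. We reproduce the chain \eqref{estimates} with $\mu$ in place of $\mu_{|B(x,\delta)}$:
\bes
W\ast\mu(x)\lesssim E\ast\mu(x)=\fint_{B(x,r)}E\ast\mu\intd z\le 2^N\fint_{B(y,2r)}E\ast\mu\intd z\le 2^N E\ast\mu(y)\lesssim W\ast\mu(y)\le\alpha\text{.}
\ees
The equality is the mean value identity \eqref{mean val eq}, valid because $E\ast\mu$ is harmonic on $(\supp\mu)^\mathrm{c}$ and $B(x,r)$ lies in that set. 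The inequality $\fint_{B(x,r)}\le 2^N\fint_{B(y,2r)}$ holds because $B(x,r)\subset B(y,2r)$, the two balls have volume ratio $2^N$, and $E\ast\mu\ge0$. The next inequality is Lemma \ref{Enu}, and the last one is Step 1. This step actually makes Step 2 unnecessary, but Step 2 is a harmless sanity check.

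Combining the three steps, $0\le W\ast\mu\le C\max(\alpha,1)$ everywhere. No step is a genuine obstacle. The only point to check is that \eqref{el3} holds at \emph{every} point of $\supp\mu$ and not merely $\mu$-a.e.; this is exactly what Lemma \ref{obs eq} provides, through the lower semicontinuity of $W\ast\mu$.
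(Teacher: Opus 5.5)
Your proof is correct and follows essentially the same route as the paper: on $\supp\mu$ the bound comes from \eqref{el3}, and off the support you use the same chain $W\ast\mu(x)\lesssim E\ast\mu(x)=\fint_{B(x,r)}E\ast\mu\le 2^N\fint_{B(y,2r)}E\ast\mu\le 2^N E\ast\mu(y)\lesssim W\ast\mu(y)$, based on harmonicity off the support and Lemma \ref{Enu}. Your Step 2 is superfluous, as you note yourself. Using $V\ge 0$ to bound by $\alpha$ rather than by $\alpha+\frac12\sup_{B_1}V$ is a harmless sharpening.
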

\begin{proof}
	For each $x \in \supp \mu \subset B_1$, by \eqref{c3h}, we have
	\bes
	0 \leq (W\ast\mu)(x)\leq  \alpha-\frac{1}{2}V(x) \leq \alpha + \frac{1}{2} \vertii{V}_{L^{\infty}(B_1)}\text{.}
	\ees
	For $x \in (\supp \mu)^\mathrm{c}$, we denote $r\coloneq\text{dist}(x, \supp \mu)$ and consider $y \in \supp \mu$ such that $\verti{x-y}= \text{dist}(x,\supp \mu)$.
	
	We have 
	\bes
	\ba
	(W\ast\mu)(x) \lesssim (E\ast \mu)(x) &\leq \fint_{B(x,r)} (E\ast \mu)(z) \intd{z} \leq 2^N \fint_{B(y,2r)} (E\ast \mu)(z) \intd{z} \\ &\overset{(a)}{\leq} 2^N (E\ast \mu)(y) \lesssim W\ast \mu(y) \leq \alpha-\frac{1}{2}V(y) \leq \alpha + \frac{1}{2}\vertii{V}_{L^{\infty}(B_1)}\text{.} 
	\ea
	\ees
	Here, we rely on Lemma \ref{Enu} for $(a)$.
\end{proof}

\section{A variational inequality}\label{sec var}
This section is strongly inspired by \cite{carrillo2016regularity}. Its purpose is to point out a variational problem, similar to \eqref{var int}, that will allow us to show that $\mu$ has an $L^2$ density. 

We start with an heuristic derivation of a variational inequality involving $W\ast \mu$ when $W(x)=1/\verti{x}^{N-2}$. In this case, $W$ is such that $-\Lap W=N(N-2)\verti{B_1} \delta_0$. This fact, combined with the Euler--Lagrange equation \eqref{el1}, implies that $w=W\ast\mu$ solves the obstacle problem \eqref{obs int}. Assuming for simplicity that $w$ is $C\cs^{\infty}$, we find that, for each  $v \in \mathring{H}^1$ such that $v\geq \alpha- 1/2 V$,  
\be\label{ipp}
\int_{\R^N} \nabla w \cdot \nabla (v-w) \intd{x} = \int_{\R^N}(-\Lap W) (v-w) \intd{x} \geq 0\text{,}
\ee
since $-\Lap w$ is non-negative and supported in $\supp\mu \subset \{w =\alpha- 1/2 V\}$, where $v\geq w$.

In this section, we introduce a bilinear form $a$ (see \eqref{def a}) in relation with $W$. More precisely, $a$ and $W$ will be related in a way that mimics the connection between the Dirichlet form 
\be\label{diri}
(v,w) \mapsto \int_{\R^N} \nabla v \cdot \nabla w \intd{x}
\ee
and the fundamental solution of $-\Lap$ (see \eqref{ipp2} and \eqref{motiv}).

Next, we justify an integration by parts in the vein of \eqref{ipp}, which allows us to show that, when $W$ satisfies \eqref{H1} and \eqref{H2}, $W\ast \mu$ solves a variational inequality similar to the above, where the Dirichlet form \eqref{diri} is replaced with $a$.
 For this purpose, we  rely on Lemmas \ref{fun H1}--\ref{Linfty}, and our approach is slightly different from the ones that have been used to deal with the case $W=1/\verti{x}^{N-2}$ (see, e.g., \cite[Proposition 2.22]{serfaty2015coulomb}). The main issue that forces us to take a different route is the fact that $W\ast \mu$ is not superharmonic. 

 Recall that we have assumed that $\supp \mu \subset B_1$. 
Let $u_0$ be a smooth function such that
\bes
\ba
0 \leq u_0 \leq \alpha\text{, }u_0=\alpha \ \text{in } \supp \mu\text{,} \ \text{and} \ u_0=0 \in B_2^\mathrm{c}\text{.} 
\ea
\ees

We also set $f= 1/2\eta V $, where $\eta$ is a cut-off function such that $\eta=1$ in $B_4$. Since $V \in H^2_{\loc}$, we have $f \in H^2$. By \eqref{obs eq} and the fact that \(W\) is positive, we have
\bes
\ba
& W\ast\mu + f \geq  u_0\text{,} \ \text{Lebesgue a.e.,} 
\\ & W\ast\mu + f = u_0\text{,} \ \mu-\text{a.e.\ in} \ \supp\mu\text{.}
\ea 
\ees
We set
\be\label{def u}
u \coloneq W\ast \mu + f-u_0\text{.}
\ee

We will show that $u$ solves a variational inequality involving the symmetric bilinear form
\be\label{def a}
a \colon \mathring{H}^1 \times \mathring{H}^1 \ni (v,w) \mapsto \int_{\R^N} \frac{\verti{\xi}^2} {m(\xi/\abs{\xi})} \mathscr{F}(v)(\xi) \overline{\mathscr{F}(w)(\xi)} \intd\xi\text{.}
\ee 

We also set  
\be\label{def F}
F(v) \coloneq  \int_{\R^N} \frac{\verti{\xi}^2}{m(\xi/\verti{\xi})}  \mathscr{F}(f-u_0)(\xi)\overline{\mF(v)(\xi)} \intd\xi=a(f-u_0,v)\text{.}
\ee
The bilinear form $a$ is associated with the linear operator $A$ defined, at least for $v\in L^2$ or $v\in \mathring H^1$,  by
\be\label{Aop}
\mathscr{F}(Av)(\xi)= (2\pi)^N \frac{\verti{\xi}^2}{m(\xi/\verti{\xi})} \mathscr{F}(v)(\xi)\text{.}
\ee

We have, by Plancherel's formula,
\be\label{ipp2}
a(v,w)= \int_{\R^N} (Av) w \intd{x}\text{,}
\ee
for each sufficiently smooth $v \in \dot{H}^2$ and $w \in L^2$, where 
\be\label{doth2}
\dot{H}^2 \coloneq \{v \in L^1_{\loc}\text{;} \, \partial_{ij} v \in L^2\text{,} \ \fo 1 \leq i\text{, }j \leq N \}\text{.}
\ee
The connection between $a$ and $W$ is provided by the formula
\be\label{motiv}
A\left(\frac{W}{(2\pi)^N}\right)=\delta_0\text{,}
\ee
which will play a role in what follows. 

We first gather some obvious properties of $a$ and $F$. These properties will be instrumental in the following.  
\begin{lemma}
    $a$ is symmetric, real-valued and satisfies 
    \be\label{a coer}
a(v,v) \approx \vertii{\nabla v}_{L^2}^2\text{,} \ \fo v \in \mathring{H}^1\text{.}
\ee
$F$ is also real-valued. Moreover, 
\be\label{F est}
F \ \text{is linear and continuous on} \ L^2 \ \text{and} \  \mathring{H}^1\text{.}
\ee
(We will denote by $\vertii{F}_{(L^2)^*}$ and $\vertii{F}_{(\mathring{H}^1)^*}$ its corresponding norms.)
\end{lemma}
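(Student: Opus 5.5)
The plan is to read everything off the Fourier side. For real-valued $v,w$, $\mF(v)(-\xi)=\overline{\mF(v)(\xi)}$, and $m$ is even on $\mathbb{S}^{N-1}$. Evenness of $m$ follows from the evenness of $W$, which gives that $\mF(W)$ is even. With the change of variables $\xi\mapsto-\xi$,
\[
\overline{a(v,w)}=\int_{\R^N}\frac{\verti{\xi}^2}{m(\xi/\verti{\xi})}\overline{\mF(v)(\xi)}\mF(w)(\xi)\intd\xi=a(w,v),
\]
and the same substitution shows $\overline{a(v,w)}=a(v,w)$. Hence $a$ is real-valued and symmetric. The identity $F=a(f-u_0,\cdot)$ then gives that $F$ is real-valued, once we know that $f-u_0\in\mathring H^1$. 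This holds because $f-u_0\in H^2$: $u_0$ is smooth with compact support and $f\in H^2$, and $H^1\subset\mathring H^1$ by Sobolev.

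For \eqref{a coer}, I use the bounds $m_0\le m\le m_1$ from \eqref{H2}. They give
\[
m_1^{-1}\int\verti{\xi}^2\verti{\mF(v)}^2\intd\xi\le a(v,v)\le m_0^{-1}\int\verti{\xi}^2\verti{\mF(v)}^2\intd\xi .
\]
By Plancherel, $\int\verti{\xi}^2\verti{\mF(v)}^2\intd\xi=(2\pi)^N\vertii{\nabla v}_{L^2}^2$. One point needs care: for $v\in\mathring H^1$, $\mF(v)$ is a tempered distribution, and we must know that it is a function with $\verti{\xi}\mF(v)\in L^2$. Since $v\in L^{2N/(N-2)}\subset\mathscr S'$, we have $\mF(\partial_j v)=i\xi_j\mF(v)$. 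Also $\partial_j v\in L^2$, so $\xi_j\mF(v)\in L^2$. Away from the origin this identifies $\mF(v)$ with an $L^1_{\loc}(\R^N\setminus\{0\})$ function. It remains to check that $\mF(v)$ carries no mass at $\{0\}$. This follows from $v\in L^{2N/(N-2)}$: a distribution supported at $0$ is the Fourier transform of a polynomial, and no nonzero polynomial lies in $L^{2N/(N-2)}$. Near $0$, $\mF(v)\in L^1_{\loc}$ because $\verti{\xi}^{-1}\in L^2_{\loc}$ when $N\ge3$. This step is the only delicate one, and I would handle it as described. Finally, Cauchy--Schwarz gives $\verti{a(v,w)}\lesssim\vertii{\nabla v}_{L^2}\vertii{\nabla w}_{L^2}$, so $a$ is well defined on $\mathring H^1\times\mathring H^1$.

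For \eqref{F est}, linearity is clear. Continuity on $\mathring H^1$ follows from the Cauchy--Schwarz bound above: $\verti{F(v)}\lesssim\vertii{\nabla(f-u_0)}_{L^2}\vertii{\nabla v}_{L^2}$. For continuity on $L^2$, I rewrite $F(v)=\int\big(\verti{\xi}^2\mF(f-u_0)/m\big)\overline{\mF(v)}\intd\xi$. Since $f-u_0\in H^2$, we have $\verti{\xi}^2\mF(f-u_0)\in L^2$, and $1/m\le 1/m_0$. Cauchy--Schwarz and Plancherel then give
\[
\verti{F(v)}\lesssim\vertii{f-u_0}_{\dot H^2}\vertii{v}_{L^2}.
\]
Equivalently, $F(v)=(2\pi)^{-N}\int A(f-u_0)\,v\intd x$ with $A(f-u_0)\in L^2$, by \eqref{ipp2}.
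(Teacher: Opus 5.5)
Your proposal is correct and follows the paper's proof: the substitution $\xi\mapsto-\xi$ with the evenness of $m$ and $\mF(w)(-\xi)=\overline{\mF(w)(\xi)}$ for real $w$, the bounds $m_0\le m\le m_1$ plus Plancherel for \eqref{a coer}, and Cauchy--Schwarz with $f-u_0\in H^2$ for \eqref{F est}. You also justify two points the paper leaves implicit, namely why $m$ is even and why $\mF(v)$ is an $L^1_{\textnormal{loc}}$ function with $\verti{\xi}\mF(v)\in L^2$ for $v\in\mathring{H}^1$. In that last justification, let $h$ be the function that agrees with $\mF(v)$ off the origin; the polynomial is then $P=v-\mF^{-1}(h)$, so you must also note that $\mF^{-1}(h)\in L^{2N/(N-2)}$ (Hardy--Littlewood--Sobolev, since $\verti{\xi}h\in L^2$) before using that no nonzero polynomial lies in $L^{2N/(N-2)}$. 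Finally, with the normalization \eqref{Aop}, \eqref{ipp2} gives $F(v)=\int_{\R^N}A(f-u_0)\,v\intd{x}$, without the factor $(2\pi)^{-N}$.
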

\begin{proof}
$a$ is indeed symmetric and real-valued since 
\be\label{a re}
\ba
\overline{a(v,w)}=a(w,v)& = \int_{\R^N} \frac{\verti{\xi}^2} {m(\xi/\abs{\xi})} \mathscr{F}(w)(\xi) \overline{\mathscr{F}(v)(\xi)} \intd\xi
\\ &\overset{(a)}{=}  \int_{\R^N} \frac{\verti{-\xi}^2} {m(-\xi/\abs{-\xi})} \overline{\mathscr{F}(w)(-\xi)} \mathscr{F}(v)(-\xi) \intd\xi
\\ &= a(v,w)\text{.}
\ea
\ee
For $(a)$, we rely on the fact that $\mathscr{F}(w)(-\xi)= \overline{\mathscr{F}(w)(\xi)}$, for any real-valued function $w \in \mathring{H}^1$, and on the evenness of $m$. 
The same computation shows that $F$ is real-valued.

The estimate 
\bes
a(v,v) \approx \vertii{\nabla v}_{L^2}^2\text{,} \ \fo v \in \mathring{H}^1\text{,}
\ees
is a straightforward consequence of \eqref{H2}.  

Using \eqref{H2} and the Cauchy--Schwarz inequality, we find that $F$ is continuous on $L^2$ and $\mathring{H}^1$, since $f-u_0$ is in $H^2$. 
\end{proof}
%

We now state the main result of this section,  inspired by \cite[Proposition 3.3]{carrillo2016regularity}. 

\begin{lemma}\label{setup}
	$u$ is the unique solution of the variational inequality
	\be\label{var ineq}
u\ge 0\text{,}\ 	a(u,v-u) \geq F(v-u)\text{,} \ \forall\, \text{$v \in \mathring{H}^1$, $v\geq 0$ a.e.}
	\ee
\end{lemma}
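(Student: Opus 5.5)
Uniqueness is the easy half, so I would handle it first. If $u_1,u_2\ge 0$ both solve \eqref{var ineq}, I test the inequality for $u_1$ with $v=u_2$ and the one for $u_2$ with $v=u_1$, then add. This gives $a(u_1-u_2,u_1-u_2)\le 0$. By \eqref{a coer}, $\nabla(u_1-u_2)=0$, and since $u_1-u_2\in L^{2N/(N-2)}$ this forces $u_1=u_2$.

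For existence, the membership conditions come first. By Lemma \ref{fun H1}, $W\ast\mu\in\mathring H^1$, because $\int W\ast\mu\intd\mu=\mE(\mu)-\int V\intd\mu<\infty$. The function $f-u_0$ is compactly supported and belongs to $H^2\subset\mathring H^1$, so $u\in\mathring H^1$. Moreover $u\ge 0$ a.e. by the first relation after the definition of $u_0$. Since $F(w)=a(f-u_0,w)$, the inequality to prove reduces to
\[
a(W\ast\mu,\,v-u)\ \ge\ 0\qquad\text{for all } v\in\mathring H^1,\ v\ge 0 .
\]

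Formally, \eqref{motiv} gives $A(W\ast\mu)=(2\pi)^N\mu$. We would then have $a(W\ast\mu,v-u)=(2\pi)^N\int (v-u)\intd\mu\ge 0$, because $v\ge0$ and $u=0$ $\mu$-a.e. The work is to justify this integration by parts when $\mu$ is only a measure and $u$ is only defined a.e.

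I would regularize by setting $\mu_\ve=\mu\ast\rho_\ve$ and $w_\ve=W\ast\mu_\ve$. For a test function $\varphi\in\mathscr S$, Plancherel together with $\mF(w_\ve)=\mF(W)\mF(\mu_\ve)$ and \eqref{H2} gives $a(w_\ve,\varphi)=(2\pi)^N\int \varphi\,\mu_\ve\intd x$. Letting $\ve\to0$, $w_\ve\to W\ast\mu$ weakly in $\mathring H^1$ by the bound in the proof of Lemma \ref{fun H1}, so $a(W\ast\mu,\varphi)=(2\pi)^N\int\varphi\intd\mu$. Density then extends this to $\varphi\in\mathring H^1\cap C_c$. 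The real difficulty is extending it to $\varphi=v-u$, which is not continuous, so that $\int\varphi\intd\mu$ is not even a priori defined for a mere $\mathring H^1$ function.

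To cross that gap I would use the quasi-continuous representative. Since $\mu$ has finite energy, $\mu$ belongs to $\mathring H^{-1}$: indeed $|\int\varphi\intd\mu|\lesssim\|\nabla\varphi\|_{L^2}\,\mE$-energy$^{1/2}$, via \eqref{land}. So $\varphi\mapsto\int\tilde\varphi\intd\mu$, with $\tilde\varphi$ the quasi-continuous representative, is the continuous extension, and the identity holds for all $\varphi\in\mathring H^1$. Positivity is preserved: $\tilde v\ge0$ quasi-everywhere, and $\mu$ charges no set of zero capacity.

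The main obstacle is to show $\tilde u=0$ $\mu$-a.e., meaning that the quasi-continuous representative of $W\ast\mu$ agrees $\mu$-a.e. with its pointwise values. This is where Lemma \ref{almost c0} enters. By \eqref{c3h}, $W\ast\mu$ is continuous at $\mu$-a.e. point of $\supp\mu$, and Lemma \ref{Linfty} gives boundedness. At such a continuity point, averages over small balls converge to the pointwise value, and the quasi-continuous representative equals the limit of ball averages quasi-everywhere. Hence $\widetilde{W\ast\mu}=W\ast\mu$ $\mu$-a.e., and $\tilde u=0$ $\mu$-a.e. by \eqref{el2}. Combining the steps gives $a(W\ast\mu,v-u)=(2\pi)^N\int\tilde v\intd\mu\ge0$.

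Alternatively, if capacity theory is to be avoided, I would approximate $v-u$ by truncated mollifications, use dominated convergence with the bound from Lemma \ref{Linfty}, and use continuity $\mu$-a.e. from \eqref{c3h} to pass to the limit in $\int\cdot\intd\mu$.
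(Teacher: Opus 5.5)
Your proposal is correct, but for existence it takes a different route from the paper.

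\textbf{Your route.} You extend the representation formula $a(W\ast\mu,\varphi)=(2\pi)^N\int\varphi\,d\mu$ from smooth $\varphi$ to all $\varphi\in\mathring{H}^1$, using quasi-continuous representatives. You then identify $\tilde u$ with the pointwise function $u$ $\mu$-a.e., via \eqref{c3h} and the fact that $\tilde u$ is the q.e.\ limit of ball averages.

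\textbf{The paper's route.} The paper never evaluates $\int\varphi\,d\mu$ for a non-smooth $\varphi$. By continuity of $a$ and $F$ on $\mathring{H}^1$ and density, it reduces to $v\in C_c^\infty$ with $v\ge 0$. It then splits the inequality into two parts:
\begin{itemize}
\item $a(u,v)\ge F(v)$, which is immediate from Lemma~\ref{fund solution} and $v\ge0$;
\item the identity $a(u,u)=F(u)$, obtained from the explicit sequence $u_n=\theta(\cdot/n)(u\ast\rho_{1/n})$. Since $u$ is bounded (Lemma~\ref{Linfty}) and continuous $\mu$-a.e.\ (\eqref{c3h}), dominated convergence gives $\int u_n\,d\mu\to\int u\,d\mu=0$.
\end{itemize}

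\textbf{Comparison.} Your approach gives a stronger statement: the representation formula holds on all of $\mathring{H}^1$. It also does not really need the boundedness from Lemma~\ref{Linfty}; continuity at $\mu$-a.e.\ point suffices. The cost is importing several standard but nontrivial facts from capacity theory:
\begin{itemize}
\item finite-energy measures do not charge sets of zero capacity;
\item Sobolev functions have Lebesgue-point limits q.e., and these define the quasi-continuous representative;
\item $v\ge 0$ a.e.\ implies $\tilde v\ge0$ q.e.
\end{itemize}
The paper's argument is elementary and self-contained.

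\textbf{Your fallback sketch.} As literally phrased, it would not work. A general $v\in\mathring{H}^1$ is neither bounded nor continuous $\mu$-a.e., so dominated convergence cannot be applied to mollifications of $v-u$. The fix is exactly the paper's split. For the $v$-part you only need $\int v_n\,d\mu\ge0$ together with continuity of $a$ and $F$. For the $u$-part you need the convergence $\int u_n\,d\mu\to0$.
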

In order to prove Lemma \ref{setup}, we establish the following auxiliary result.
\begin{lemma}\label{fund solution}
	For each $\varphi \in C\cs^{\infty}$, we have
	\bes
	a(W\ast \mu, \varphi) = (2\pi)^N \int_{\R^N}\varphi \intd\mu\text{.}
	\ees
\end{lemma}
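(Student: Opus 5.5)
Since $\mu$ is not smooth, we work through the mollified measures $\mu_\ve\coloneq\mu\ast\rho_\ve$ and then let $\ve\to0$. For $\mu_\ve$ the identity will follow from the Fourier representation of $W$ and Plancherel's formula. Then we pass to the limit on both sides.

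\emph{Step 1: the smooth case.} Fix $\varphi\in C\cs^\infty$ and $\ve>0$. Since $W\in\mathscr S'$ and $\mu_\ve\in C\cs^\infty\subset\mathscr S$, we have $\mF(W\ast\mu_\ve)=\mF(W)\mF(\mu_\ve)=\frac{m(\xi/\verti{\xi})}{\verti{\xi}^2}\mF(\mu_\ve)$ as functions, as in the proof of Lemma~\ref{fun H1}. By Lemma~\ref{fun H1}, applied to $\nu=\mu_\ve$ (which has finite energy, being bounded with compact support), $W\ast\mu_\ve\in\mathring H^1$, so $a(W\ast\mu_\ve,\varphi)$ is defined. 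Plugging into \eqref{def a}, the multiplier cancels exactly:
\bes
a(W\ast\mu_\ve,\varphi)=\int_{\R^N}\mF(\mu_\ve)(\xi)\overline{\mF(\varphi)(\xi)}\intd\xi=(2\pi)^N\int_{\R^N}\mu_\ve\varphi\intd x\text{,}
\ees
by Parseval. The integrand is in $L^1$ because $\mF(\mu_\ve)$ is bounded and $\mF(\varphi)\in\mathscr S$. This is the rigorous form of \eqref{motiv}. One point needs care here: $m$ is merely $L^\infty$ and $\mF(W)$ is a priori only a distribution when $N=3,4$. I would record that \eqref{H2} asserts that $\mF(W)$ \emph{is} the locally integrable function $m/\verti{\xi}^2$, which is all that is used.

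\emph{Step 2: passage to the limit.} The right-hand side converges to $(2\pi)^N\int\varphi\intd\mu$, since $\int\mu_\ve\varphi=\int(\varphi\ast\check\rho_\ve)\intd\mu$ and $\varphi\ast\check\rho_\ve\to\varphi$ uniformly. For the left-hand side, \eqref{a coer} and Cauchy--Schwarz give $\verti{a(g,\varphi)}\lesssim\vertii{\nabla g}_{L^2}\vertii{\nabla\varphi}_{L^2}$, so $a(\cdot,\varphi)$ is a continuous linear functional on $\mathring H^1$. It therefore suffices to show that $W\ast\mu_\ve\rightharpoonup W\ast\mu$ weakly in $\mathring H^1$. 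The proof of Lemma~\ref{fun H1} shows that $\nabla(W\ast\mu_\ve)$ is bounded in $L^2$ by $C\int W\ast\mu\intd\mu<\infty$. Moreover $W\ast\mu_\ve=(W\ast\mu)\ast\rho_\ve\to W\ast\mu$ in $\mathscr S'$, and in fact in $\mathring H^1$ strongly, since $W\ast\mu\in\mathring H^1$ by Lemma~\ref{fun H1} and mollification converges in $\mathring H^1$. This strong convergence is cleaner than the weak-compactness route, so I would use it directly.

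\emph{Alternative.} Equivalently, one can skip mollification and compute $a(W\ast\mu,\varphi)=\int\mF(\mu)\overline{\mF(\varphi)}\intd\xi$. This requires $\mF(W\ast\mu)=\mF(W)\mF(\mu)$ for the finite, compactly supported measure $\mu$, with $\mF(\mu)$ a bounded continuous function. Justifying this product identity is precisely what the mollification handles.

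\emph{Main obstacle.} The main obstacle is the identification of $\mF(W\ast\mu_\ve)$ as the function $\mF(W)\mF(\mu_\ve)$ in the low dimensions. It rests on $\mF(W)$ being a tempered distribution given by the function in \eqref{H2}, and on $\mF(\mu_\ve)$ being Schwartz. All remaining steps are routine.
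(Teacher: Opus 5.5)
Your proof is correct and follows essentially the same route as the paper. You mollify $\mu$, use $\mF(W\ast\mu_\ve)=\mF(W)\mF(\mu_\ve)$ so that the multiplier in $a$ cancels, and apply Parseval to obtain $(2\pi)^N\int\varphi\,\mu_\ve$. You then pass to the limit using the strong convergence $W\ast\mu_\ve\to W\ast\mu$ in $\mathring{H}^1$ (via Lemma~\ref{fun H1}), the continuity of $a(\cdot,\varphi)$, and $\int\varphi\,\mu_\ve\intd x\to\int\varphi\intd\mu$.
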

 
\begin{proof}
	Set $\mu_{\ve} = \mu \ast \rho_{\ve}$. By Lemma \ref{fun H1}, $W\ast \mu \in \mathring{H}^1$, thus $W\ast \mu_{\ve} \in \mathring{H}^1$ and $ W\ast \mu_{\ve} \to W\ast \mu$ in $\mathring{H}^1$. 
	Therefore, since $a$ is continuous on $\mathring{H}^1$, we find that
	\be\label{a lim}
	a(W\ast \mu_{\ve},\varphi) \to a(W\ast \mu,\varphi)\text{,} \ \text{as} \ \ve \to 0\text{.}
	\ee
	
	On the other hand, we clearly have 
	\be\label{mu lim}
	\int_{\R^N} \varphi(x) \mu_{\ve}(x) \intd{x} \to  \int_{\R^N} \varphi \intd\mu\text{,} \ee 
	since $\varphi \in C\cs^{\infty}$.
	
	Finally, 
		\be\label{smooth eq}
	\ba
	a(W\ast \mu_{\ve}, \varphi)&=\int_{\R^N} \frac{\verti{\xi}^2}{m(\xi/\abs{\xi})}  \mathscr{F}(W\ast \mu_{\ve})(\xi) \overline{\mathscr{F}(\varphi)(\xi)} \intd\xi
	\\ & = \int_{\R^N}\frac{\verti{\xi}^2}{m(\xi/\abs{\xi})} \mathscr{F}(W)(\xi) \mathscr{F}(\mu_{\ve})(\xi) \overline{\mathscr{F}(\varphi)(\xi)} \intd\xi
	\\ & = \int_{\R^N} \mathscr{F}(\mu_{\ve})(\xi) \overline{\mathscr{F}(\varphi)(\xi)} \intd\xi= (2\pi)^N\int_{\R^N} \varphi(x) \mu_{\ve}(x) \intd{x}\text{.}
	\ea
	\ee
	  Passing to the limits in \eqref{smooth eq} yields the desired conclusion,  using \eqref{a lim} and \eqref{mu lim}.
\end{proof}
\begin{proof}[Proof of Lemma \ref{setup}]
	It suffices to show that \eqref{var ineq} holds for each nonnegative $v\in C\cs^{\infty}$. Indeed, this implies the desired conclusion thanks to: (i) the continuity on $\mathring{H}^{1}$ of the forms $a$ and $F$; (ii) the fact that, for each nonnegative $v \in \mathring{H}^1$,   we may find a sequence $(v_n) \subset C\cs^{\infty}$ of nonnegative functions such that $v_n \to v$ in $\mathring{H}^1$.
	
	Let $v \in C\cs^{\infty}$ be such that $v\geq 0$. We have 
	\be\label{eq auv}
	\begin{aligned}
	a(u,v)&= a(W\ast \mu +f-u_0,v) =a(W\ast \mu,v)+a(f-u_0,v)
	\\
	&= a(W\ast \mu,v)+F(v)\text{.}
	\end{aligned}
	\ee
	
	By Lemma \ref{fund solution}, we have $a(W\ast \mu,v)= (2\pi)^N \int_{\R^N} v \intd\mu$ and thus, since $v\geq 0$,
	\be\label{ineq}
	a(u,v) \geq F(v)\text{.}
	\ee
	
	Next, we show that 
	\be\label{eq}
	a(u,u)=F(u)\text{.}
	\ee
	
	For this purpose, we consider $u_{n}= \theta(\cdot/n)(u\ast \rho_{1/n})$, $n \geq 1$, where $\theta$ is a cut-off function such that $\theta=1$ in $B_1$. The sequence $(u_n) \subset C\cs^{\infty}$ is such that $u_n \geq 0$, for each $n\geq 1$, and $u_n \to u$ in $\mathring{H}^1$ as $n\to \infty$ (see, e.g., \cite[proof of Theorem 11.43]{leoni2017first}). By \eqref{eq auv} and Lemma \ref{fund solution}, we have, for each $n\geq 1$,
	\be\label{un}
	a(u,u_{n})= a(W\ast \mu,u_n)+ F(u_{n}) =(2\pi)^N \int_{\R^N} u_n \intd\mu + F(u_{n})\text{.}
	\ee
	
	By Lemma \ref{almost c0}, we have that $u_n(x) \to u(x)$ for $\mu-$a.e.\ $x \in \R^N$ (since $u$ is continuous $\mu$-a.e.\ and $(u_n)$ is defined by mollifying $u$). Since \(u\) is bounded (by Lemma \ref{Linfty} and the definition of \(u\)), we have the existence of $C<\infty$ such that $\verti{u_n(x)}\leq C$ for each $x \in \R^N$, $n\geq 1$. Hence, we have, by dominated convergence, that
	\bes
	\int_{\R^N}u_n \intd\mu \to \int_{\R^N} u \intd\mu\text{,} \ \text{as} \ n  \to \infty\text{.}
	\ees
	But $u=0$, $\mu-$a.e., by definition of $u$ and Lemma \ref{obs eq}. Thus 
	\be\label{mu lim1}
	\lim_n \int_{\R^N} u_n \intd\mu =0\text{.}
	\ee
	Since $u_n \to u$ in $\mathring{H}^1$, we also have
	\be\label{dx lim}
	a(u,u_n) \to a(u,u) \ \text{and} \ F(u_n) \to  F(u)\text{,} \ \text{as} \ n \to \infty\text{.}
	\ee
	Passing to the limits $n\to \infty$ in \eqref{un}, using \eqref{mu lim1} and \eqref{dx lim}, we find \eqref{eq}.

Combining \eqref{ineq} and \eqref{eq},
	we find 
	\bes
	a(u,v-u)= a(u,v)-a(u,u) \geq  F (v) -  F (u) = F(v-u)\text{.} 
	\ees
	
Finally, the uniqueness of $u$ is standard.
Indeed, for any other solution $w \in \mathring{H}^1$ of \eqref{var ineq}, using
\bes
\ba
&a(u,w-u)\geq F(w-u)\text{,}
\\  & a(w,u-w) \geq F(u-w)\text{,}
\ea
\ees 
we obtain that $a(u-w,u-w) \leq 0$.   Using \eqref{a coer}, we find that  $u=w$ a.e.
\end{proof}

\section{Regularity of the solution of the variational inequality \texorpdfstring{\eqref{var ineq}}{ref}}\label{sec reg}

The main result of this section is the following.
\begin{lemma}\label{var reg}
    The solution $u$ of \eqref{var ineq} is in  $\mathring{H}^1 \cap \dot{H}^2$ (see \eqref{doth2} for the definition of  $\dot{H}^2$).
\end{lemma}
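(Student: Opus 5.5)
We prove Lemma \ref{var reg} by penalization, in the spirit of \cite[Chapter IV, Section 5]{kinderlehrer2000introduction}. Fix a smooth nondecreasing convex function $\beta \colon \R \to \R$ with $\beta(t)=0$ for $t\ge 0$, $\beta(t)<0$ for $t<0$ and $\beta'$ bounded, and set $\beta_\ve(t)\coloneq \beta(t)/\ve$. For $\ve>0$ we consider the penalized problem: find $u_\ve\in \mathring{H}^1$ such that
\bes
a(u_\ve,v)+\int_{\R^N}\beta_\ve(u_\ve)\,v\intd{x} = F(v)\text{,} \ \fo v\in \mathring{H}^1\cap L^2\text{.}
\ees
This is the Euler--Lagrange equation of the strictly convex functional $v\mapsto \tfrac12 a(v,v)+\int \mathcal B_\ve(v)-F(v)$, where $\mathcal B_\ve'=\beta_\ve$ and $\mathcal B_\ve\ge 0$. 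By \eqref{a coer} and \eqref{F est} this functional is coercive on $\mathring{H}^1$, so a unique minimizer $u_\ve$ exists. We then show that $u_\ve\to u$ in $\mathring{H}^1$, that the negative parts $u_\ve^-$ tend to $0$, and that the limit satisfies \eqref{var ineq}. Uniqueness in Lemma \ref{setup} then identifies the limit with $u$. These are the standard monotonicity arguments: testing with $v=u_\ve$ gives $a(u_\ve,u_\ve)\le F(u_\ve)$, hence a uniform $\mathring{H}^1$ bound.

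The heart of the proof is a uniform $L^2$ bound on $\beta_\ve(u_\ve)$. Formally we test with $v=\beta_\ve(u_\ve)\le 0$, which gives
\bes
a(u_\ve,\beta_\ve(u_\ve))+\vertii{\beta_\ve(u_\ve)}_{L^2}^2=F(\beta_\ve(u_\ve))\le \vertii{F}_{(L^2)^*}\vertii{\beta_\ve(u_\ve)}_{L^2}\text{.}
\ees
If $a(u_\ve,\beta_\ve(u_\ve))\ge 0$, we obtain $\vertii{\beta_\ve(u_\ve)}_{L^2}\le \vertii{F}_{(L^2)^*}$ uniformly in $\ve$. The equation then says that $Au_\ve = (2\pi)^N(\text{something bounded in } L^2)$, namely $F$ is represented by $A(f-u_0)\in L^2$ because $f-u_0\in H^2$. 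Since $|\xi|^2/m$ is comparable to $|\xi|^2$, we get $\vertii{|\xi|^2\mF(u_\ve)}_{L^2}\lesssim 1$, that is, $u_\ve$ is bounded in $\dot{H}^2$. Passing to the weak limit gives $u\in \dot H^2$.

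\textbf{Main obstacle.} The delicate step is the positivity $a(w,\beta(w))\ge 0$ for a nondecreasing Lipschitz $\beta$ with $\beta(0)=0$, because $a$ is a nonlocal Fourier multiplier form and no chain rule $\nabla\beta(w)=\beta'(w)\nabla w$ is available inside $a$. I would prove it using the kernel representation. By \eqref{motiv}, $A$ is the inverse of convolution with $W/(2\pi)^N$, and $W>0$. Consequently the semigroup, or resolvent, generated by $-A$ is positivity preserving; one checks this via $(\lambda+A)^{-1}$ having a nonnegative kernel, for instance through a Beurling--Deny criterion, or through the maximum principle for the penalized problem itself. The standard Beurling--Deny/Dirichlet-form theory then yields $a(w,\beta(w))\ge 0$ for normal contractions.

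If this route fails, I would instead establish positivity of the resolvent from $W\ge 0$ as follows. Test the problem $\lambda v+Av=g\ge 0$ with $v^-$, and write $a(v,v^-)$ via the approximation $A=\lim_{t\to0}(I-P_t)/t$ with positive kernels $P_t$. The resulting inequality $a(v^+,v^-)\le 0$ is exactly the Dirichlet-form property, which gives the needed sign. This approximation argument also justifies rigorously testing with $\beta_\ve(u_\ve)\in \mathring{H}^1\cap L^2$.
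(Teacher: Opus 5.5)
Your outline (penalize, bound the penalty term uniformly in $L^2$, read off a uniform $\dot H^2$ bound from the equation because $F\in(L^2)^*$, pass to a weak limit and identify it with $u$ via Lemma~\ref{setup}) matches the paper's. However, the step you call the heart of the proof is false for this form.

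\textbf{Why the positivity fails.} Suppose $a(w,\beta(w))\ge 0$ held for all $w$ and all nondecreasing Lipschitz $\beta$ with $\beta(0)=0$. Take $p,q\ge 0$ with disjoint supports and $w=\lambda p-q$. Then $\beta(w)=\beta(-q)$ and
$a(w,\beta(w))=\lambda\, a(p,\beta(-q))-a(q,\beta(-q))$.
Letting $\lambda\to\infty$ forces $a(p,r)\le 0$ for all nonnegative $p,r$ with disjoint supports. That is the first Beurling--Deny criterion. For a translation-invariant form it holds iff the symbol $\psi(\xi)=\verti{\xi}^2/m(\xi/\verti{\xi})$ is negative definite (Schoenberg). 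A negative definite function that is homogeneous of degree $2$ must be a quadratic form, since the L\'evy part of its L\'evy--Khintchine representation is $o(\verti{\xi}^2)$. So $a$ has the property you need only when $a(v,w)=c\int\nabla v\cdot Q\nabla w$ for a constant matrix $Q$, which is essentially the Coulomb case.

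For the model kernel $W_\alpha$ with $\alpha\neq 0$, $\psi(\xi)=\verti{\xi}^4/\bigl(C_N((N-2-\alpha)\xi_1^2+(N-2+\alpha)\sum_{k\ge 2}\xi_k^2)\bigr)$ is not a polynomial. The kernel of $A$ is then positive somewhere off the diagonal, and $a(w,\beta(w))<0$ for suitable $w$. The inference \enquote{$A^{-1}$ is convolution with $W/(2\pi)^N\ge 0$, hence $(\lambda+A)^{-1}$ and $e^{-tA}$ preserve positivity} is therefore wrong. Positivity of the Green kernel means $1/\psi$ is positive definite, which is strictly weaker than $\psi$ being negative definite. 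Your fallback through $A=\lim_{t\to 0}(I-P_t)/t$ with positive kernels $P_t$ presupposes exactly this property, so it is circular.

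\textbf{The missing idea.} Apply the multiplier $m$ to the test function before testing. Since $\mF(T^{-1}g)=m(\xi/\verti{\xi})\mF(g)$, one has $a(w,T^{-1}g)=(2\pi)^N\int\nabla w\cdot\nabla g$. This is the local Dirichlet integral, where the ordinary chain rule is available.

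The paper tests \eqref{eps pb} with $v_1=-T^{-1}(u_\ve^-/\ve)$. Then:
\begin{itemize}
\item $a(u_\ve,v_1)=\frac{(2\pi)^N}{\ve}\int_{\{u_\ve<0\}}\verti{\nabla u_\ve}^2\ge 0$.
\item The penalty term becomes $\ve^{-2}\int u_\ve^-\,T^{-1}(u_\ve^-)\gtrsim\vertii{u_\ve^-/\ve}_{L^2}^2$, because $m\ge m_0$.
\item $F(v_1)\lesssim\vertii{F}_{(L^2)^*}\vertii{u_\ve^-/\ve}_{L^2}$, because $m\le m_1$.
\item The extra term $\frac{\ve}{2}\int (Tv)v$ makes the problem coercive on $H^1$, so $v_1$ is admissible. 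Its contribution $-\frac{1}{\ve}\int u_\ve u_\ve^-\ge 0$ has the right sign since $TT^{-1}=I$.
\end{itemize}

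In your own framework, testing with $T^{-1}\beta_\ve(u_\ve)$ instead of $\beta_\ve(u_\ve)$ gives $(2\pi)^N\int\beta_\ve'(u_\ve)\verti{\nabla u_\ve}^2\ge 0$. This repairs the argument, since $\beta_\ve(u_\ve)\in H^1$ and $T^{-1}$ is bounded on $H^1$.

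A minor slip: a function that vanishes on $[0,\infty)$ and is negative on $(-\infty,0)$ cannot be convex. You want a concave $\beta$, for example a smoothing of $t\mapsto\min(t,0)$.
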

\begin{remark}
Let us emphasize that the proof of Lemma \ref{var reg} only relies on the fact that $u$ solves \eqref{var ineq}, and not on its relation with our initial minimization problem. More specifically, the proof shows that, if $w$ solves 
\bes
w \in \mathring{H}^1\text{,} \ w \geq 0 \ \text{a.e.,}
\ a(w,v-w) \geq  G(v-w)\text{,} \ \fo v \in \mathring{H}^1\text{,} \ v\geq 0 \ \text{a.e.,}
\ees
with $G \in (\mathring{H}^1)^*\cap (L^2)^* $, then $w \in \mathring{H}^1 \cap \dot{H}^2$. 
\end{remark}

While Lemma \ref{var reg} is interesting on its own, its relevance to our study arises in connection with Theorem \ref{main}.

\begin{proof}[Proof of Theorem \ref{main} granted Lemma \ref{var reg}]
We have that $u \in \dot{H}^2$, thanks to Lemma \ref{var reg}.  We thus have, for each $v \in L^2$, 
\be \label{a L2}
\verti{a(u,v)}= \verti{\int_{\R^N} \frac{\verti{\xi}^2}{m(\xi/\verti{\xi})} \mF(u)(\xi) \overline{\mF(v)(\xi)} \intd\xi  } \leq C \vertii{D^2 u}_{L^2} \vertii{v}_{L^2}\text{,}
\ee
thanks to the Cauchy--Schwarz inequality and assumption \eqref{H2}. By the  definitions \eqref{def u}  of $u$ and \eqref{def F}  of $F$ , and thanks to Lemma \ref{fund solution}, we have, for each $\varphi \in C\cs^{\infty}$,
\be\label{mu rec}
\ba
a(u,\varphi)= a(W\ast \mu,\varphi) +F(\varphi)
 = (2\pi)^N \int_{\R^N} \ \varphi \intd\mu +F(\varphi)\text{.}
\ea
\ee
Combining \eqref{a L2} with \eqref{mu rec}, and using the continuity of $F$ on $L^2$, we finally find that 
\bes
\verti{\int_{\R^N} \varphi \intd\mu} \leq C \vertii{\varphi}_{L^2}\text{,} \ \fo \varphi \in C\cs^{\infty}\text{.}
\ees
This shows that $\mu \in L^2$.
\end{proof}
In order to prove Lemma \ref{var reg}, we use an approach that is strongly inspired by \cite[Chapter IV, Section 5]{kinderlehrer2000introduction}. It consists of letting $\ve\to 0$ in the penalized variational problem \eqref{pena} below. Since $u$ is the solution of \eqref{var ineq}, it is the unique minimizer of
\bes
E\colon \mathring{H}^1 \ni v \mapsto \frac{1}{2} a(v,v) -F(v)\text{,}
\ees
in $\{ v \in \mathring{H}^1;  v\geq 0 \}$.   We will \enquote{approximate} the variational problem
\be\label{var}
\text{minimize} \  E(v) \ \text{on} \ \{v \in \mathring{H}^1\text{; }  v\geq 0 \}\text{,}
\ee
by the sequence of problems
\be\label{pena}
\text{minimize} \ E_{\ve}(v)\coloneq \frac{1}{2}a(v,v) + \frac{1}{2}\ve \int_{\R^N} (Tv)v \intd{x} +\frac{1}{2\ve} \int_{\{v<0\}} v^2 \intd{x} -  F(v) \ \text{on} \ H^1\text{.}
\ee

In the above, the term $\frac{1}{2\ve} \int_{\{v<0\}} v^2 \intd{x}$ leads, as $\ve \to 0$, to the constraint  $v\geq 0$ in \eqref{var}.
Compared to \eqref{var}, the advantage of \eqref{pena}  is that it is straightforward to obtain regularity results on its solution.   Our aim is to transfer these results to $u$ \textit{via} the following strategy.

\begin{Step}\label{step1}
	We  obtain $\dot{H}^2$ $\ve$-independent estimates on the solution $u_{\ve}$ of \eqref{pena}.
\end{Step} 

\begin{Step}\label{step2}
	We extract, as $\ve \to 0$, a subsequence of $(u_{\ve})$ that  converges to $\tilde{u}$ (in some weak sense), and we show that $\tilde{u}$ is a solution of \eqref{var ineq}. Since $u$ is the unique solution of \eqref{var ineq}, we obtain that $u=\tilde{u}$. The uniform $\dot{H}^2$ estimates on $(u_{\ve})$ obtained in the first step, and the fact that $u_{\ve}$  converges to $u$ yield $u \in \dot{H}^2$.
\end{Step}
\resetstep

We start with Step 1 and first define $T$ by  
\[
\mF(T(v))(\xi)\coloneq \frac{1}{m(\xi/\abs{\xi})}\mF(v)(\xi)\text{, for a.e.\ }  \xi\in\R^N\setminus\{0\}\text{,}
\]
for each $v \in L^2$. The following result is the consequence of computations similar to \eqref{a re}. We omit its proof. 
\begin{lemma}\label{T props}
	\begin{enumerate}[(i)]
		\item For each $u \in L^2$, $Tu$ is real valued and $ \displaystyle \vertii{Tu}_{L^2} \approx \vertii{u}_{L^2}$. 
		
		\item For each $u \in H^1$,  $ \displaystyle \vertii{Tu}_{H^1} \approx \vertii{u}_{H^1}$. 
		
		\item The bilinear form 
		\bes
		L^2 \times L^2 \ni (u,v) \mapsto \int_{\R^N} (Tu) v \intd{x}
		\ees
		is real-valued and symmetric, and we have
		\be\label{L^2 co}
		\int_{\R^N} (Tu) u \intd{x} \approx \vertii{u}_{L^2}^2\text{,} \ \fo u \in L^2\text{.}
		\ee
	\end{enumerate}
	The same holds for $T^{-1}$.
\end{lemma}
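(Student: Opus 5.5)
The statement is Lemma \ref{T props}: properties of the Fourier multiplier $T$ with symbol $1/m(\xi/\verti{\xi})$. All three items follow from Plancherel's theorem, the bounds $0<m_0\le m\le m_1$ in \eqref{H2}, and the evenness of $m$. The same symbol argument applied to $m$ itself handles $T^{-1}$.

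\emph{Well-definedness and real-valuedness.} For $v\in L^2$ we have $\mF(v)\in L^2$, and $1/m(\xi/\verti{\xi})$ lies in $[1/m_1,1/m_0]$. Hence $\mF(Tv)\in L^2$, and $Tv\in L^2$ is well defined by the inverse Fourier transform. Since $v$ is real, $\mF(v)(-\xi)=\overline{\mF(v)(\xi)}$. Since $\Psi$ is even, $W$ is even, so $\mF(W)$ is real and even; by \eqref{H2} this means $m(-\omega)=m(\omega)$. Therefore $\mF(Tv)(-\xi)=\overline{\mF(Tv)(\xi)}$, which is exactly the condition for $Tv$ to be real-valued. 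This is the computation \eqref{a re} again.

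\emph{Norm equivalences.} For (i), Plancherel gives
\bes
\vertii{Tu}_{L^2}^2=(2\pi)^{-N}\int_{\R^N} m(\xi/\verti{\xi})^{-2}\verti{\mF(u)(\xi)}^2\intd\xi ,
\ees
and this is squeezed between $m_1^{-2}\vertii{u}_{L^2}^2$ and $m_0^{-2}\vertii{u}_{L^2}^2$. For (ii), the $H^1$ norm is expressed through the weight $(1+\verti{\xi}^2)$ in Fourier space, so the same two-sided bound holds. Equivalently, $T$ commutes with derivatives, so $\nabla Tu=T\nabla u$, and (i) applies componentwise.

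\emph{The bilinear form.} For (iii), Plancherel gives
\bes
\int_{\R^N}(Tu)v\intd{x}=(2\pi)^{-N}\int_{\R^N}\frac{1}{m(\xi/\verti{\xi})}\mF(u)(\xi)\overline{\mF(v)(\xi)}\intd\xi .
\ee
The left side is real because $Tu$ and $v$ are real. Symmetry follows by taking complex conjugates and changing variables $\xi\mapsto-\xi$, using the evenness of $m$, exactly as in \eqref{a re}. Taking $v=u$, the integrand is nonnegative and bounded between $m_1^{-1}\verti{\mF(u)}^2$ and $m_0^{-1}\verti{\mF(u)}^2$, which gives \eqref{L^2 co}.

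\emph{The inverse.} $T^{-1}$ is the Fourier multiplier with symbol $m(\xi/\verti{\xi})$, which is even and lies in $[m_0,m_1]$. The identical arguments therefore apply.

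The only point needing care is that $m$ is even, which is where the evenness of $\Psi$ enters. Everything else is routine.
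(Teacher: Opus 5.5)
Your proof is correct and is essentially the argument the paper intends. The paper omits the proof, saying it follows from computations similar to \eqref{a re}, which is exactly your combination of Plancherel's formula, the bounds $m_0\le m\le m_1$ from \eqref{H2}, and the evenness of $m$; your derivation of that evenness from the evenness of $\Psi$, which the paper uses tacitly, is a welcome addition.
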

	

 The next result yields the approximating sequence $(u_{\ve})$. 
\begin{lemma}
\label{lemma3.2}
	Let $\ve >0 $. There exists a unique minimizer $u_{\ve} \in H^1$ of 
	\be\label{eps en}
	E_{\ve} \colon H^1 \ni v \mapsto \frac{1}{2}a(v,v) + \frac{1}{2}\ve \int_{\R^N} (Tv) v \intd{x} +\frac{1}{2\ve} \int_{\{v<0\}} v^2 \intd{x} -  F(v)\text{.}
	\ee

Moreover, $u_\ve$ is the unique solution of the corresponding Euler--Lagrange equation 
	\be\label{eps pb}
	a(u_{\ve},v) + \ve \int_{\R^N}(Tu_{\ve})v \intd{x} -\frac{1}{\ve}\int_{\R^N}u_{\ve}^{-} v \intd{x} =  F (v)\text{,} \ \fo v \in {H}^1\text{,}
	\ee

Formally, equation \eqref{eps pb} reads 
\bes
Au_{\ve}+\ve Tu_{\ve}-\frac{1}{\ve}u_{\ve}^{-}=F\text{,}
\ees
with $A$ defined in \eqref{Aop}.
\end{lemma}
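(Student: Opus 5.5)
We will apply the direct method of the calculus of variations on $H^1$, followed by a convexity argument for the Euler--Lagrange equation.

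\emph{Step 1: $E_\ve$ is well defined, convex and coercive on $H^1$.} For $v\in H^1\subset \mathring{H}^1\cap L^2$, \eqref{a coer} gives $a(v,v)\approx\vertii{\nabla v}_{L^2}^2$, and \eqref{L^2 co} gives $\int (Tv)v\approx \vertii{v}_{L^2}^2$. Hence the quadratic part
\bes
Q_\ve(v)\coloneq \frac12 a(v,v)+\frac{\ve}{2}\int_{\R^N}(Tv)v\intd{x}
\ees
is a continuous, symmetric, coercive quadratic form on $H^1$, with $Q_\ve(v)\ge c_\ve\vertii{v}_{H^1}^2$. It is therefore strictly convex.

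The penalization term $P(v)\coloneq \frac{1}{2\ve}\int (v^-)^2$ is convex, since $t\mapsto (t^-)^2$ is convex. It is nonnegative and continuous on $L^2$; indeed, $\verti{v^--w^-}\le \verti{v-w}$.

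By \eqref{F est}, $F$ is continuous and linear on $H^1$. Therefore $E_\ve$ is strictly convex, continuous, and satisfies $E_\ve(v)\ge c_\ve\vertii{v}_{H^1}^2-C\vertii{v}_{H^1}$.

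\emph{Step 2: existence and uniqueness of the minimizer.} Take a minimizing sequence. By coercivity it is bounded in $H^1$, so we can extract a subsequence converging weakly in $H^1$. A convex continuous functional is weakly l.s.c., so the weak limit is a minimizer. Uniqueness follows from strict convexity. Equivalently, one could use Lax--Milgram-type theory for monotone operators, but the direct method suffices.

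\emph{Step 3: the Euler--Lagrange equation \eqref{eps pb}.} We differentiate $t\mapsto E_\ve(u_\ve+tv)$ at $t=0$.

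The quadratic part is Gateaux differentiable, with derivative
\bes
a(u_\ve,v)+\ve\int_{\R^N}(Tu_\ve)v\intd{x}.
\ees
This uses the symmetry of $a$ and of $(u,v)\mapsto\int (Tu)v$ (Lemma \ref{T props}).

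For $P$, note that $\phi(t)=\frac12 (t^-)^2$ is $C^1$ with $\phi'(t)=-t^-$ and $\verti{\phi'(s)-\phi'(t)}\le\verti{s-t}$. By the mean value theorem and dominated convergence in $L^2$,
\bes
\frac{d}{dt}P(u_\ve+tv)\Big|_{t=0}=-\frac1\ve\int_{\R^N} u_\ve^- v\intd{x}.
\ees
Minimality then gives \eqref{eps pb}.

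Conversely, any solution of \eqref{eps pb} is a minimizer, because for a convex Gateaux-differentiable functional a critical point is a global minimizer. Hence the solution of \eqref{eps pb} is unique. Alternatively, subtract the equations for two solutions $w_1,w_2$ and test with $w_1-w_2$, using the monotonicity $-(w_1^--w_2^-)(w_1-w_2)\ge 0$ and the coercivity of $Q_\ve$.

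\emph{Main obstacle.} The only delicate point is the differentiability of the penalty, together with checking that $\int (Tv)v$ is real, symmetric and coercive on $L^2$. Both are routine given Lemma \ref{T props}. The formal equation $Au_\ve+\ve Tu_\ve-\frac1\ve u_\ve^-=F$ is just \eqref{eps pb} read through \eqref{ipp2}.
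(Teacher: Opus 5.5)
Your proposal is correct and follows essentially the same route as the paper: a coercive lower bound on $E_\ve$ plus weak lower semicontinuity yields a minimizer, and strict convexity of the quadratic part yields uniqueness. Differentiating $t\mapsto E_\ve(u_\ve+tv)$ at $t=0$ then gives \eqref{eps pb}; your use of the $1$-Lipschitz derivative of $t\mapsto\frac12(t^-)^2$ is a cleaner version of the paper's hand computation of the penalty's derivative. You also explicitly justify that $u_\ve$ is the \emph{unique} solution of \eqref{eps pb}, either because critical points of a convex Gateaux-differentiable functional are minimizers or by monotonicity of $t\mapsto -t^-$; the paper's proof leaves this point implicit and only effectively covers it later via Lemma \ref{Td6}.
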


\smallskip

\begin{proof}[Proof of Lemma \ref{lemma3.2}]	
	\textit{Existence of $u_\ve$}.
 	By \eqref{a coer}, \eqref{L^2 co}, and since $F$ is continuous on $\mathring{H}^1$, there exist constants $0<c_1,c_2<\infty$ such that
	\be\label{E est}
	     c_1\vertii{\nabla v}_{L^2}^2 + c_1 \vertii{v}_{L^2}^2-c_2 \vertii{\nabla v}_{L^2} \leq E_{\ve}(v)\text{,} \ \fo v \in H^1\text{.}
	\ee
	Consider now a minimizing sequence $(v_n) \subset H^1 $ of $E_{\ve}$. This sequence is bounded in $H^1$. Indeed, \eqref{E est} implies that $(\vertii{\nabla v_n}_{L^2})$ is bounded. Since $(\vertii{\nabla v_n}_{L^2})$ is bounded,  we also obtain that $(\vertii{v_n}_{L^2})$ is bounded, again by \eqref{E est}.
	
	Hence, we may extract a subsequence of $(v_n)$ and find $u_\ve\in H^1$ such that $v_n \rightharpoonup u_\ve$ in $H^1$. Since every term in $E_{\ve}$ is weakly l.s.c.\ with respect to $H^1$, we find that $u_\ve$ minimizes $E_{\ve}$.

\smallskip
\noindent
\textit{$u_\ve$ is unique.} 	This follows from the fact that  every term appearing in the definition of $E_{\ve}$ is convex on $H^1$, while $v \to a(v,v)$ (for example) is strictly convex. 
	
	\smallskip
	\noindent
	\textit{$u_\ve$ satisfies  \eqref{eps pb}}.  Let $v \in H^1$. We have
	\be\label{1 dif}
	\ba
	&\frac{\diffd}{\diffd t}\left(\frac{1}{2}a(u_{\ve}+tv, u_{\ve}+tv)\right)_{|t=0} = a(u_{\ve},v)\text{,} \\ & \frac{\diffd}{\diffd t}\left(\frac{1}{2}\int_{\R^N}\left(T(u_{\ve}+tv) \right)(u_{\ve}+tv) \intd{x} \right)_{|t=0} = \int_{\R^N}(Tu_{\ve}) v \intd{x}\text{,}
	\\ &\frac{\diffd}{\diffd t}\left( F(u_{\ve}+tv)\right)_{|t=0} = F(v)\text{.}
	\ea
	\ee
	
	We also have 
	\bes
	\ba
	\int_{\R^N} &\left((u_{\ve}+ tv)^{-}\right)^2 \intd{x} - \int_{\R^N} \left(u_{\ve}^{-} \right)^2 \intd{x} =  \int_{\{u_{\ve}<-tv\}} (u_{\ve}+tv)^2 \intd{x} - \int_{\{u_{\ve}<0\}}u_{\ve}^2 \intd{x}
	\\ & =\int_{\{u_{\ve}<-tv\}} u_{\ve}^2 \intd{x} - \int_{\{u_{\ve}<0\}} u_{\ve}^2 \intd{x}+2t \int_{\{u_{\ve}<-tv\}} u_{\ve}v \intd{x} + o_{t\to 0}(t)
	\\ &= \int_{\{0 \leq u_{\ve}<-tv\}}u_{\ve}^2 \intd{x} - \int_{\{-tv\leq u_{\ve}<0\}} u_{\ve}^2 \intd{x} +2t \int_{\{u_{\ve}<-tv\}} u_{\ve}v \intd{x} + o_{t\to \infty}(t)\text{.}
	\ea
	\ees
	But 
	\bes 
	\verti{ \int_{\{0 \leq u_{\ve}<-tv\}}u_{\ve}^2 \intd{x}- \int_{\{-tv\leq u_{\ve}<0\}} u_{\ve}^2 \intd{x} } \leq 2 t^2 \int_{\R^N} v^2 \intd{x} =  o_{t\to \infty}(t)\text{.}
	\ees
	Therefore, we have
	\bes
	\frac{\int_{\R^N} \left((u_{\ve}+ tv)^{-}\right)^2 \intd{x} - \int_{\R^N} \left(u_{\ve}^{-} \right)^2 \intd{x}}{t}= 2\int_{\{u_{\ve}<-tv\}}u_{\ve}v \intd{x} + o_{t\to 0}(1)\text{,}
	\ees 
	and we may conclude, by dominated convergence, that 
	\be\label{2 dif}
	\frac{\diffd}{\diffd t}\left(  \int_{\R^N} \left((u_{\ve}+ tv)^{-}\right)^2 \intd{x}\right)_{|t=0} = 2 \int_{\{u_{\ve}<0\}}u_{\ve}v \intd{x} = -2 \int_{\R^N}u_{\ve}^{-} v \intd{x}\text{.}
	\ee
	
 By \eqref{1 dif} and \eqref{2 dif}, $t \to E_{\ve}(u_{\ve}+tv)$ is differentiable at $0$. Since it attains its minimum at $0$, by definition of $u_{\ve}$, \eqref{1 dif} and \eqref{2 dif} also yield \eqref{eps pb}.
\end{proof}

We now show $\ve$-independent $\dot{H}^2$ estimates on the solution $u_{\ve}$ of \eqref{eps pb}.
\begin{lemma}\label{uni H2}
    We have 
    \bes
\vertii{D^2 u_{\ve}}_{L^2} \leq C \vertii{F}_{(L^2)^{*}}\text{,} \ \fo \ve>0\text{,}
\ees
where $C<+\infty$ is independent of $\ve$.
\end{lemma}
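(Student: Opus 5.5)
We follow the Kinderlehrer--Stampacchia strategy: test the penalized equation \eqref{eps pb} with $v=Au_\ve$. Formally this is the choice $v=-\Lap u_\ve$ in the classical setting. Since $u_\ve$ is a priori only in $H^1$, we would first work with difference quotients, or equivalently with a frequency-truncated test function. Concretely, let $P_R$ denote the Fourier multiplier by $\mathbbm{1}_{\{\verti{\xi}\le R\}}$ and test \eqref{eps pb} with $v_R\coloneq A P_R u_\ve$. This $v_R$ is smooth and lies in $H^1$, and its Fourier transform is $(2\pi)^N\verti{\xi}^2 m^{-1}\mF(u_\ve)\mathbbm{1}_{\{\verti{\xi}\le R\}}$. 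Each term can then be computed or estimated directly on the Fourier side, and at the end we let $R\to\infty$ by monotone convergence.

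The key steps, in order, are the following.
\begin{enumerate}[(i)]
\item The leading term is
\[
a(u_\ve,v_R)=(2\pi)^N\int_{\{\verti{\xi}\le R\}}\frac{\verti{\xi}^4}{m^2}\verti{\mF(u_\ve)}^2\intd\xi\approx\vertii{D^2P_Ru_\ve}_{L^2}^2,
\]
where the comparison uses \eqref{H2}.
\item The $\ve$-term is
\[
\ve\int_{\R^N}(Tu_\ve)v_R\intd{x}=\ve(2\pi)^N\int_{\{\verti{\xi}\le R\}}\frac{\verti{\xi}^2}{m^2}\verti{\mF(u_\ve)}^2\intd\xi\ge0 .
\]
It has the right sign and can be dropped.
\item The right-hand side is bounded by $\verti{F(v_R)}\le\vertii{F}_{(L^2)^*}\vertii{v_R}_{L^2}\lesssim\vertii{F}_{(L^2)^*}\vertii{D^2P_Ru_\ve}_{L^2}$.
\item The penalty term requires the sign property
\begin{equation}\label{pen sign}
-\frac1\ve\int_{\R^N}u_\ve^-\,(Au_\ve)\intd{x}\ge0 ,
\end{equation}
at least in the limit $R\to\infty$.
\end{enumerate}
Granting \eqref{pen sign}, steps (i)--(iii) give $\vertii{D^2P_Ru_\ve}_{L^2}^2\lesssim\vertii{F}_{(L^2)^*}\vertii{D^2P_Ru_\ve}_{L^2}$. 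Dividing and letting $R\to\infty$ yields the claimed bound, uniformly in $\ve$.

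The main obstacle is \eqref{pen sign}, because $A$ is nonlocal and Stampacchia's pointwise argument, $-\int u^-\Lap u=\int\verti{\nabla u^-}^2\ge0$, is no longer available. We would instead use that $A$ is a Dirichlet-form type operator. We expect to write
\[
\int (-u^-)\,Au\intd{x}=a(u,-u^-)=a(u^+,-u^-)+a(u^-,u^-).
\]
Here $a(u^-,u^-)\ge0$ by \eqref{a coer}, so it remains to show $a(u^+,u^-)\le0$. This is the step where we are least sure of the route. We would first try to represent $a$ through the kernel $W$: since $A(W/(2\pi)^N)=\delta_0$, formally
\[
a(v,w)=\int\!\!\int \frac{v(x)-v(y)}{\ldots}
\]
is not available for a second-order operator. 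Instead we would use $a(u^+,u^-)=\int\nabla u^+\cdot\mathcal{M}\nabla u^-$, with $\mathcal{M}$ the zero-order multiplier $\xi\otimes\xi/(\verti{\xi}^2m)$, and exploit the disjoint supports of $\nabla u^+$ and $\nabla u^-$ together with positivity of $m$. If this fails, we would fall back on the alternative test function $v=T^{-1}$-type modifications, choosing the test so that the penalty pairs with $-\Lap u_\ve$ exactly, namely $v=-\Lap u_\ve$ itself. In that case the penalty term is nonnegative by the local argument. We would then need to check that $a(u_\ve,-\Lap u_\ve)\approx\vertii{D^2u_\ve}^2$, which holds since $\verti{\xi}^4/m\approx\verti{\xi}^4$, and that $\ve\int (Tu_\ve)(-\Lap u_\ve)=\ve\int\verti{\xi}^2m^{-1}\verti{\mF u_\ve}^2\ge0$. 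This fallback choice avoids the nonlocal sign issue entirely, so we would adopt it as the actual plan, justifying $u_\ve\in\dot H^2$ a priori by the truncation $P_R$ (using $v=-\Lap P_Ru_\ve$). For the penalty term at finite $R$, we would pass to the limit using $u_\ve^-\in H^1$ and $\nabla P_Ru_\ve\to\nabla u_\ve$ in $L^2$.
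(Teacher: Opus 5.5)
Your final plan, testing \eqref{eps pb} with $v_R=-\Lap P_Ru_\ve$, is correct. It is a genuinely different route from the paper's.

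In your version the anisotropy sits in the coercive term:
$a(u_\ve,v_R)=\int_{\{\verti{\xi}\le R\}}\verti{\xi}^4m^{-1}\verti{\mF(u_\ve)}^2\intd\xi\gtrsim\vertii{D^2P_Ru_\ve}_{L^2}^2$.
The $T$-term is nonnegative, and the penalty term $-\frac1\ve\int\nabla u_\ve^-\cdot\nabla P_Ru_\ve$ pairs with the local Dirichlet form.

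The paper never tests with a second-order quantity. It proceeds in three steps:
\begin{itemize}
\item It tests with $v_1=-T^{-1}(u_\ve^-/\ve)$. Then $T^{-1}$ cancels $1/m$, so $a(u_\ve,v_1)=\frac{(2\pi)^N}{\ve}\int_{\{u_\ve<0\}}\verti{\nabla u_\ve}^2\ge0$, which yields the $L^2$ bound \eqref{sing} on $u_\ve^-/\ve$.
\item It tests with $v_2=\ve u_\ve$ to get \eqref{easy}.
\item It reads off $\verti{a(u_\ve,v)}\lesssim\vertii{F}_{(L^2)^*}\vertii{v}_{L^2}$ from the equation and turns this into a bound on $\Lap u_\ve$ via $\int\nabla u_\ve\cdot\nabla v=(2\pi)^{-N}a(u_\ve,T^{-1}v)$.
\end{itemize}
The underlying idea is the same in both: make the penalty meet the local Laplacian, where Stampacchia's sign argument works. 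You put the anisotropy into the $a$-term; the paper moves it into the test function.

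What each route buys:
\begin{itemize}
\item In the paper, all test functions lie in $H^1$ from the start, so no frequency truncation or limit is needed.
\item The paper also obtains \eqref{sing} as a by-product, and this is reused in Lemma \ref{Td4} to show $\tilde u\ge0$. On your route you would recover it afterwards from \eqref{eps pb}, together with \eqref{easy} (test with $\ve u_\ve$) and the $\dot H^2$ bound.
\item Your route gives a single, classical-looking energy estimate.
\end{itemize}

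One point to tidy. At finite $R$ the penalty term has no sign, so you cannot divide first. Instead write $cX_R^2\le C\vertii{F}_{(L^2)^*}X_R+\delta_R$, with $X_R=\vertii{D^2P_Ru_\ve}_{L^2}$ and $\limsup_R\delta_R\le0$. The latter holds because the penalty term tends to $\frac1\ve\int_{\{u_\ve<0\}}\verti{\nabla u_\ve}^2\ge0$. Then take $\limsup_R$ and conclude by monotone convergence in $\xi$.

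Finally, dropping the test $Au_\ve$ was the right call. The sign you needed, $a(u,u^-)\le0$ for all $u$, is equivalent (replace $u$ by $\lambda u^+-u^-$) to $a(u^+,u^-)\le0$. That is the first Beurling--Deny criterion for positivity preservation of $e^{-tA}$. It fails unless $\verti{\xi}^2/m$ is a quadratic form.
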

\begin{proof} 
We claim that it suffices to prove that, for each $\ve>0$, we have
\be\label{sing}
\vertii{\frac{u^{-}_{\ve}}{\ve}}_{L^2} \leq C \vertii{F}_{(L^2)^{*}}\text{,}
\ee
and
\be\label{easy}
\vertii{\ve Tu_\ve}_{L^2}\leq C \vertii{F}_{(L^2)^{*}}\text{.}
\ee
Indeed, granted \eqref{sing} and \eqref{easy}, we have, by \eqref{eps pb}, 
\be\label{a est}
\verti{a(u_{\ve},v)} \leq C \vertii{F}_{(L^2)^{*}} \vertii{v}_{L^2}\text{,} \ \fo v\in H^1\text{.}
\ee
Therefore, we have, for each $v \in H^1$,
\be\label{a id}
\ba
	\int_{\R^N} \nabla u_\ve \cdot \nabla v \intd{x}
		&=
		\frac{1}{(2\pi)^N}\int_{\R^N} \abs{\xi}^2\mF(u_\ve)(\xi)\overline{\mF(v)(\xi)} \intd\xi 
		\\ &=
		\frac{1}{(2\pi)^N}\int_{\R^N} \frac{\abs{\xi}^2}{m(\xi/\verti{\xi})}\mF(u_\ve)(\xi)\overline{m(\xi/\verti{\xi})\mF(v)(\xi)} \intd\xi
		\\ &= 	\frac{1}{(2\pi)^N}\int_{\R^N} \frac{\abs{\xi}^2}{m(\xi/\verti{\xi})}\mF(u_\ve)(\xi)\overline{\mF(T^{-1}(v))(\xi)} \intd\xi
		\\ &= \frac{1}{(2\pi)^N} a(u_\ve,T^{-1}(v))\text{.}
\ea		
\ee	
Hence, we find that 
\be\label{weak form}
\verti{\int_{\R^N} \nabla u_\ve \cdot \nabla v \intd{x}} \leq C \vertii{F}_{(L^2)^{*}} \vertii{T^{-1}(v)}_{L^2} \leq C \vertii{F}_{(L^2)^{*}} \vertii{v}_{L^2}\text{,} \ \fo v \in H^1\text{,}
\ee
using \eqref{a id} and the fact that $T^{-1}$ is a continuous operator on $L^2$ (Lemma \ref{T props}). This yields the desired conclusion.

We now prove \eqref{sing} and \eqref{easy}. In order to obtain \eqref{sing}, we test equation \eqref{eps pb} with 
\bes
v_1\coloneq -T^{-1}\left(\frac{u_{\ve}^{-}}{\ve}\right)\text{.}
\ees
The function $v_1$ is in $H^1$ since $u_{\ve}^{-} \in H^1$ and $T^{-1}$ is a continuous operator on $H^1$, by Lemma \ref{T props}. 
For later use, we recall that  
\be \label{classi}
\nabla(v^{-})= (-\nabla v ) \mathbbm{1}_{\{v<0\}}\text{,}\ \fo  v \in H^1\text{.}
\ee

We have
\be \label{sign a}
\ba
		a(u_\ve, v_1)
		&=
		-\frac{1}{\ve}\int_{\R^N} \frac{\abs{\xi}^2}{m(\xi/\verti{\xi})} 
		\mF(u_\ve)(\xi)\overline{m(\xi/\verti{\xi})\mF(u^{-}_{\ve})(\xi)} \intd\xi
		\\ & =
		-\frac{1}{\ve}\int_{\R^N} \abs{\xi}^2 
		\mF(u_\ve)(\xi)\overline{\mF(u^{-}_{\ve})} \intd\xi\\
		& \overset{(a)}{=}
		-\frac{(2\pi)^N}{\ve}\int_{\R^N} \nabla u_{\ve} \cdot \nabla (u^{-}_{\ve})  \intd{x}
		 \\ &\overset{(b)}{=}   	\frac{(2\pi)^N}{\ve}\int_{\{u_{\ve}<0\}} \verti{\nabla u_{\ve}}^2  \intd{x} \geq 0\text{.}
		\ea
\ee
Here, we use Plancherel's formula for (a) and the formula \eqref{classi} for (b).

We also have 
\be\label{sign T}
		\ba 
	\int_{\R^N}T(u_{\varepsilon}) \ v_1 \intd{x}
	&=-\frac{1}{\ve (2\pi)^N}\int_{\R^N}
	\frac{1}{m(\xi/\verti{\xi})}\mF(u_\ve)(\xi) \overline{m(\xi/\verti{\xi}) \mF(u^{-}_{\ve})(\xi)} \intd\xi
	\\ & =-\frac{1}{\ve}\int_{\R^N} 
	u_{\ve} u^{-}_{\ve} \intd{x}\ge0\text{.}
	\ea 
\ee
By \eqref{sign a}, \eqref{sign T}, and thanks to the equation \eqref{eps pb}, we find that  
\be \label{cl 1}
\ba
 \int_{\R^N} \left(-\frac{u^{-}_{\ve}}{\ve}\right)T^{-1}\left(-\frac{u^{-}_{\ve}}{\ve}\right) \intd{x}&=\int_{\R^N} \left(-\frac{u^{-}_{\ve}}{\ve}\right) v_1 \\ & \leq F(v_1) \\ &\leq \vertii{F}_{(L^2)^{*}} \vertii{v_1}_{L^2} \\ & \leq C \vertii{F}_{(L^2)^{*}} \vertii{\frac{u^{-}_{\ve}}{\ve}}_{L^2}\text{.}
\ea
\ee
Combined with the fact that $\int_{\R^N} (T^{-1}v) v \intd{x} \approx \vertii{v}_{L^2}$, for each $v \in L^2$ (Lemma \ref{T props}), \eqref{cl 1} yields \eqref{sing}.

In order to obtain \eqref{easy}, we test equation \eqref{eps pb} with 
\bes
v_2 \coloneq \ve u_{\ve} \in H^1\text{.}
\ees

We have 
\be\label{v21}
a(u_{\ve},v_2)=\ve a(u_{\ve},u_{\ve}) \geq 0\text{,}
\ee
and
\be\label{v22}
-\frac{1}{\ve}\int_{\R^N}u_{\ve}^{-}v_2 = -\int_{\R^N}u_{\ve}^{-}u_{\ve} \intd{x} \geq 0\text{.}
\ee
Therefore, using \eqref{eps pb}, \eqref{v21}, and \eqref{v22}, we find that
\be \label{cl 2}
\ba
\ve^2 \int_{\R^N} (T u_{\ve}) u_{\ve} \intd{x} &= \ve \int_{\R^N} (Tu_{\ve}) v_2 \intd{x} \\ &\leq F(v_2) \leq \vertii{F}_{(L^2)^*} \vertii{v_2}_{L^2}= \vertii{F}_{(L^2)^*} \vertii{\ve u_{\ve}}_{L^2}\text{.}
\ea
\ee
Since we have that 
\bes
\ve^2 \int_{\R^N} (T u_{\ve}) u_{\ve} \intd{x} \approx \vertii{\ve u_{\ve}}_{L^2}^2 \approx \vertii{\ve Tu_\ve}_{L^2}^2\text{,}
\ees
by Lemma \ref{T props}, \eqref{cl 2} implies that \eqref{easy} holds.
\end{proof}

We extract a converging subsequence from $(u_\ve)_{\ve>0}$ thanks to the following result.
\begin{lemma}\label{extract}
    We have 
    \bes
    \vertii{\nabla u_{\ve}}_{L^2} \leq C \vertii{F}_{(\mathring{H}^1)^*}\text{,} \ \fo \ve>0\text{,}
    \ees
    where $C<\infty$ is independent of $\ve$.
\end{lemma}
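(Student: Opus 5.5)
The natural approach is to test the Euler--Lagrange equation \eqref{eps pb} with $v=u_\ve$ itself. This is admissible since $u_\ve\in H^1$. We obtain
\begin{equation*}
a(u_\ve,u_\ve)+\ve\int_{\R^N}(Tu_\ve)u_\ve\intd{x}-\frac{1}{\ve}\int_{\R^N}u_\ve^{-}u_\ve\intd{x}=F(u_\ve)\text{.}
\end{equation*}
The idea is that, exactly as in \eqref{v21}--\eqref{v22}, every term on the left except the first is nonnegative, so they can all be dropped.

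The three terms are handled as follows.
\begin{enumerate}[(i)]
\item By Lemma \ref{T props}, specifically \eqref{L^2 co}, we have $\ve\int(Tu_\ve)u_\ve\intd{x}\ge 0$.
\item Since $u_\ve^{-}u_\ve=-(u_\ve^-)^2$ pointwise, the penalization term equals $\frac1\ve\int (u_\ve^-)^2\intd{x}\ge 0$.
\item By \eqref{a coer}, the first term satisfies $a(u_\ve,u_\ve)\ge c\vertii{\nabla u_\ve}_{L^2}^2$ with $c>0$ depending only on $N$ and $W$.
\end{enumerate}
Dropping the nonnegative terms gives
\begin{equation*}
c\vertii{\nabla u_\ve}_{L^2}^2\le F(u_\ve)\le \vertii{F}_{(\mathring{H}^1)^*}\vertii{u_\ve}_{\mathring{H}^1}\text{.}
\end{equation*}

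Before dividing, two points need checking. First, we must check that $u_\ve\in H^1\subset\mathring{H}^1$ ($N\ge3$, Sobolev), so that \eqref{F est} applies to $u_\ve$. Second, the norm on $\mathring{H}^1$ must be (equivalent to) $\vertii{\nabla\cdot}_{L^2}$. This equivalence follows from the Sobolev inequality $\vertii{v}_{L^{2N/(N-2)}}\lesssim\vertii{\nabla v}_{L^2}$ on $\mathring{H}^1$, which holds for functions vanishing at infinity in the sense used in Lemma \ref{fun H1}. With both points in hand, dividing by $\vertii{\nabla u_\ve}_{L^2}$ gives the claim with $C=C'/c$, independent of $\ve$. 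The case $\nabla u_\ve=0$ is trivial.

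The main obstacle is precisely this bookkeeping: making sure that the dual norm $\vertii{F}_{(\mathring{H}^1)^*}$ is taken with respect to a norm controlled by $\vertii{\nabla\cdot}_{L^2}$, and not by the full $H^1$ norm, which would bring in the $\ve$-dependent $L^2$ bound. Since $F(v)=a(f-u_0,v)$, Cauchy--Schwarz and \eqref{H2} in any case give $\verti{F(v)}\lesssim\vertii{\nabla(f-u_0)}_{L^2}\vertii{\nabla v}_{L^2}$ directly. This makes the estimate clean regardless of the normalization chosen for $\mathring{H}^1$.
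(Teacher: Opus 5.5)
Your proof is correct and is essentially the paper's argument: you test \eqref{eps pb} with $v=u_\ve$, drop the two nonnegative terms, and combine \eqref{a coer} with the continuity of $F$ on $\mathring{H}^1$, normed by $\vertii{\nabla\cdot}_{L^2}$. Your extra checks make explicit what the paper leaves implicit: the identity $u_\ve^-u_\ve=-(u_\ve^-)^2$, the inclusion $H^1\subset\mathring{H}^1$, and the equivalence of norms on $\mathring{H}^1$. You also correctly keep the square in $a(u_\ve,u_\ve)\approx\vertii{\nabla u_\ve}_{L^2}^2$, which the paper's text omits.
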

\begin{proof}
          We obtain the desired conclusion by testing \eqref{eps pb} with $v=u_{\ve}$, using the same arguments as in the proof of Lemma \ref{uni H2}. Indeed, since $\int_{\R^N} (Tu_{\ve}) u_{\ve}\intd{x} \geq 0$ (by Lemma \ref{T props}) and $-\int_{\R^N}u_{\ve}^{-} u_{\ve} \intd{x} \geq 0$, we find,   using equation \eqref{eps pb}, that 
          \be\label{cl 3}
          a(u_{\ve}, u_{\ve}) \leq F(u_{\ve})\text{.}
          \ee

          Since $a(u_{\ve},u_{\ve}) \approx \vertii{\nabla u_\ve}_{L^2}$ (by \eqref{a coer}) and $\displaystyle F(u_{\ve}) \leq \vertii{F}_{(\mathring{H}^1)^*} \vertii{\nabla u_{\ve}}_{L^2}$, equation \eqref{cl 3} yields Lemma \ref{extract}.
\end{proof}

We now turn to Step \ref{step2}. By Lemma \ref{extract}, we find a function $\tilde{u} \in \mathring{H}^1$ such that, up to a subsequence, 
\be
	\label{Tca}
	u_{\ve_k}\rightharpoonup \tilde{u} \ \text{in} \ \mathring{H}^1\text{.}
\end{equation}

We have 
\be \label{dotH}
\vertii{D^2 \tilde{u}}_{L^2} \leq \liminf_{k  \to \infty} \vertii{D^2 u_{\ve_k}}_{L^2} \leq C\vertii{F}_{(L^2)^*}\text{,}
\ee
using Lemma \ref{uni H2} for the last inequality. 

The final ingredient of the proof of  Lemma \ref{var reg} is the following.
\begin{lemma}\label{Td4}
The function $\tilde{u}$ satisfies \eqref{var ineq}.
\end{lemma}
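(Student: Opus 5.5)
We must show $\tilde u\ge 0$ a.e. and $a(\tilde u,v-\tilde u)\ge F(v-\tilde u)$ for all $v\in\mathring{H}^1$ with $v\ge 0$, following the penalization argument of \cite[Chapter IV, Section 5]{kinderlehrer2000introduction}.

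\emph{Nonnegativity.} By \eqref{sing}, $\vertii{u_{\ve}^-}_{L^2}\le C\ve\vertii{F}_{(L^2)^*}\to 0$. The weak convergence \eqref{Tca} in $\mathring{H}^1$ yields, via Rellich on balls, strong $L^2_{\loc}$ convergence along a further subsequence, hence a.e.\ convergence $u_{\ve_k}\to\tilde u$. Therefore $u_{\ve_k}^-\to\tilde u^-$ a.e. Combined with $u_{\ve_k}^-\to 0$ in $L^2$, Fatou gives $\tilde u^-=0$ a.e.

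\emph{Inequality, for test functions in $H^1$.} Let $v\in H^1$ with $v\ge 0$. Test \eqref{eps pb} with $v-u_{\ve}\in H^1$:
\[
a(u_\ve,v-u_\ve)+\ve\int_{\R^N}(Tu_\ve)(v-u_\ve)\intd{x}-\frac1\ve\int_{\R^N}u_\ve^-(v-u_\ve)\intd{x}=F(v-u_\ve).
\]
The penalization term has a sign: $-\frac1\ve\int u_\ve^- v\le 0$ because $v\ge 0$, and $\frac1\ve\int u_\ve^-u_\ve=-\frac1\ve\int (u_\ve^-)^2\le 0$. The total penalization contribution is therefore $\le 0$, so
\[
a(u_\ve,v)-F(v-u_\ve)+\ve\int_{\R^N}(Tu_\ve)v\intd{x}\ \ge\ a(u_\ve,u_\ve)+\ve\int_{\R^N}(Tu_\ve)u_\ve\intd{x}\ \ge\ a(u_\ve,u_\ve),
\]
using Lemma \ref{T props}.

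\emph{Passing to the limit.} This is the main point to handle carefully. By \eqref{easy}, $\vertii{\ve Tu_\ve}_{L^2}$ is bounded; I expect in fact $\ve\vertii{u_\ve}_{L^2}^2\lesssim \vertii{F}_{(\mathring{H}^1)^*}^2$, obtained by testing with $u_\ve$ as in Lemma \ref{extract}. Hence $\vertii{\ve Tu_\ve}_{L^2}\lesssim\sqrt\ve\to 0$, and $\ve\int (Tu_\ve)v\to 0$ for fixed $v\in L^2$. Since $F\in(\mathring{H}^1)^*$ and $a(\cdot,v)\in(\mathring{H}^1)^*$, weak convergence gives $a(u_{\ve_k},v)\to a(\tilde u,v)$ and $F(u_{\ve_k})\to F(\tilde u)$. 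Weak lower semicontinuity of the continuous convex quadratic form $a(w,w)$ on $\mathring{H}^1$ (by \eqref{a coer}) gives $a(\tilde u,\tilde u)\le\liminf a(u_{\ve_k},u_{\ve_k})$. Together these give $a(\tilde u,v-\tilde u)\ge F(v-\tilde u)$ for all $v\in H^1$ with $v\ge0$.

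\emph{Density.} To reach all $v\in\mathring{H}^1$ with $v\ge0$, approximate $v$ by nonnegative $C\cs^\infty$ functions in $\mathring{H}^1$, exactly as in the proof of Lemma \ref{setup}, and use the continuity of $a$ and $F$ on $\mathring{H}^1$. This proves \eqref{var ineq} for $\tilde u$.
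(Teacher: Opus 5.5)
Your proof is correct, but for the main inequality it takes a different route from the paper.

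The paper uses the Minty trick. By the monotonicity of $G_\varepsilon$ (Lemma \ref{Td6}), it replaces $u_\varepsilon$ by the test function $v$ in the first slot of $a$, obtaining $a(v,v-u_\varepsilon)+\varepsilon\int(Tv)(v-u_\varepsilon)\ge F(v-u_\varepsilon)$ for $v\ge 0$. This expression depends only linearly on $u_\varepsilon$, so it passes to the weak limit. The $\varepsilon T$ term is controlled by a Fourier argument for $v\in C^\infty_c$, using only the uniform $\mathring{H}^1$ bound of Lemma \ref{extract}. The paper then extends to nonnegative $v\in\mathring{H}^1$ by density and recovers $a(\tilde u,v-\tilde u)\ge F(v-\tilde u)$ through Minty's lemma \cite[Lemma 1.5]{kinderlehrer2000introduction}.

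You instead test \eqref{eps pb} directly with $v-u_\varepsilon$ and discard the penalization term by its sign. You handle the quadratic term $a(u_\varepsilon,u_\varepsilon)$ by weak lower semicontinuity of the nonnegative continuous quadratic form $w\mapsto a(w,w)$ on $\mathring{H}^1$. The price is that you need the decay $\|\varepsilon Tu_\varepsilon\|_{L^2}=O(\sqrt{\varepsilon})$, because the bound \eqref{easy} alone would not make $\varepsilon\int(Tu_\varepsilon)v$ vanish. This decay does follow exactly as you indicate: testing with $u_\varepsilon$ gives $\varepsilon\int(Tu_\varepsilon)u_\varepsilon\le F(u_\varepsilon)\le C\|F\|^2_{(\mathring{H}^1)^*}$ by Lemma \ref{extract}, and Lemma \ref{T props} converts this into the $L^2$ decay. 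So that step is rigorous and not merely expected.

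Your route is shorter and dispenses with Lemma \ref{Td6} and Minty's lemma entirely; it exploits the fact that the principal part is a coercive bilinear form. The paper's route is the standard device for monotone operators. It would survive if the principal part were nonlinear, where no weak lower semicontinuity of $w\mapsto\langle G(w),w\rangle$ is available.

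Your nonnegativity argument is essentially the paper's, phrased directly with Fatou's lemma rather than by contradiction.
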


\begin{proof}[Proof of Lemma \ref{var reg} granted Lemma \ref{Td4}]
Since $u$ is the unique solution of \eqref{var ineq} (Lemma \ref{setup}), Lemma \ref{Td4}  implies that $u=\tilde{u}$ a.e. By \eqref{Tca} and \eqref{dotH}, we thus have that $u \in \mathring{H}^1 \cap \dot{H}^2.$ 
\end{proof}

We now turn to the proof of Lemma \ref{Td4}.  We use the following properties.  Recall that, in   Hilbert space $H$,  an operator $G : H \to (H)^{*}$ is monotone if 
\be\label{def mon}
	\DualityProd{G(v)-G(w)}{v-w}\geq 0\text{, } \fo v\text{, }w \in H\text{,} 
\ee 
and stricly monotone if, in addition, equality in \eqref{def mon} implies that $v=w$. We have 
\begin{lemma}\label{Td6}
	The operator
	\[
		H^1 \ni v \mapsto
		G_\ve(v)\in (H^1)^*\text{,}
	\]
	defined by
	\[
		\DualityProd{G_\ve(v)}{\varphi}
		=
		a(v,\varphi)+\ve \int_{\R^N}(Tv)\varphi-\frac{1}{\ve}\int_{\R^N}v^-\varphi-F(\varphi)\text{,} \ \fo \varphi \in H^1\text{,}
	\]
	is strictly monotone.
\end{lemma}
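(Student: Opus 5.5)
We will compute $\DualityProd{G_\ve(v)-G_\ve(w)}{v-w}$ directly for $v,w\in H^1$ and split it into three terms. The term $F$ is linear, so it cancels in the difference. This gives
\[
\DualityProd{G_\ve(v)-G_\ve(w)}{v-w}= a(v-w,v-w)+\ve\int_{\R^N}T(v-w)(v-w)\intd{x}-\frac{1}{\ve}\int_{\R^N}(v^- - w^-)(v-w)\intd{x}\text{.}
\]
We treat the three terms separately.

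For the first term, since $v-w\in H^1\subset\mathring{H}^1$, \eqref{a coer} gives $a(v-w,v-w)\ge 0$. For the second term, Lemma \ref{T props}(iii), namely \eqref{L^2 co}, gives
\[
\ve\int_{\R^N}T(v-w)(v-w)\intd{x}\ge c\,\ve\vertii{v-w}_{L^2}^2\ge 0\text{.}
\]
For the third term, the map $s\mapsto s^-=\max(-s,0)$ is non-increasing on $\R$. Hence $(s^- - t^-)(s-t)\le 0$ pointwise for all real $s,t$, and the third term is therefore nonnegative as well. All three terms are finite because $v^-,w^-\in L^2$. Summing, the operator $G_\ve$ is monotone.

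For strict monotonicity, suppose the left-hand side vanishes. Each of the three terms is nonnegative, so each one vanishes. In particular $c\,\ve\vertii{v-w}_{L^2}^2\le 0$, so $v=w$ a.e. Alternatively, $a(v-w,v-w)=0$ gives $\nabla(v-w)=0$, and since $v-w\in L^2$ this also forces $v=w$. This proves that $G_\ve$ is strictly monotone.

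I expect no real obstacle here. The only points requiring care are checking that the bilinear terms are real, which was established earlier in \eqref{a re} and in Lemma \ref{T props}, and stating the pointwise monotonicity of $s\mapsto -s^-$.
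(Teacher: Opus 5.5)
Your proposal is correct and matches the paper's proof: you use the same three-term decomposition, the same sign arguments (coercivity of $a$ via \eqref{a coer}, coercivity of $T$ via \eqref{L^2 co}, and pointwise monotonicity of $t\mapsto -t^-$), and the same source of strictness, namely the vanishing of the $a$- or $T$-term forcing $v=w$. The details you add, that $H^1\subset\mathring{H}^1$ by Sobolev embedding and that the terms are finite, are correct and only make explicit what the paper leaves implicit.
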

\begin{proof}
	We notice that 
	\begin{equation}
		\label{Td1}
		\ba 
			a(v-\varphi,v-\varphi)
			\ge C \int_{\R^N}\abs{\nabla(v-\varphi)}^2\ge0\text{,}			
		\ea 
	\end{equation}
	and 
	\begin{equation}
		\label{Td2}
		\ba 
			\int_{\R^N}(T(v-\varphi))(v-\varphi)\ge C\int_{\R^N}\abs{v-\varphi}^2\ge0\text{.}		
		\ea 
	\end{equation}
	Clearly,  equality in \eqref{Td1} or \eqref{Td2} only occurs when $v=\varphi$.
	Thus, it suffices to prove that
	\begin{equation*}
		(-v^-+\varphi^-)(v-\varphi)(x)\ge0\text{, }\fo x\in\R^N\text{.}
	\end{equation*}
This is a direct consequence of the fact that $t\mapsto -t^{-}$ is increasing.
\end{proof}

We now turn to the 
\begin{proof}[Proof of Lemma \ref{Td4}]
		 We first prove that \(\tilde{u}\ge0\) a.e. 
	We argue by contradiction and assume that $\verti{\{u< 0\}}>0$. This implies that there exists $\delta>0$ such that $\verti{\{\tilde{u}< -\delta\}}>0$. By \eqref{Tca}, we have that, up to a subsequence,  $ u_{\ve_{k}} \to \tilde{u}$ a.e.\ on $A\coloneq \verti{\{\tilde{u}< -\delta\}}$, by compactness of the embedding $\mathring{H}^1 \hookrightarrow L^2_{\loc} $ and the converse to the dominated convergence theorem. Therefore, $\frac{1}{\ve_k^2}\left(u_{\ve_{k}}^{-}\right)^2 \to \infty$ a.e.\ on $A$ and we find, by Fatou's Lemma, that
	\bes
	\liminf_\ell 	 \int_A \frac{1}{\ve_\ell^2}\abs{u_{\ve_\ell}^-}^2 = \infty\text{.}
	\ees
This is a  contradiction, since \(\vertii{\frac{1}{\ve}u_\ve^-}_{L^2} \le C\vertii{F}_{(L^2)^*}\), for each $\ve>0$, by \eqref{sing}.
	
	Next, we prove that 
	\be \label{eq goal}
	a(\tilde{u},v-\tilde{u}) \geq F(\tilde{u})\text{,} \ \fo v \in \mathring{H}^1\text{,} \ v \geq 0\text{.} 
	\ee
	
	Indeed, 
	let \(u_\ve\) be the solution of \eqref{eps pb}.
	By Lemma \ref{Td6}, we have
	\begin{equation}\label{Td7}
		\DualityProd{G_\ve(v)}{v-u_\ve}
		\ge
		\DualityProd{G_\ve(u_\ve)}{v-u_\ve}
		=0
		\text{, }
		\fo v\in H^1 \text{.}
	\end{equation}
	Since \( v^-=0 \) when \(v\ge0\), \eqref{Td7} implies
	\begin{equation}\label{Td8}
		a(v,v-u_\ve)+\ve \int_{\R^N}(Tv)(v-u_\ve)\ge F(v-u_\ve)\text{, }
		\fo v\in H^1 \text{, }v\ge0\text{.}
	\end{equation}
	
	For each $\varphi \in C\cs^{\infty}$, since $\mathscr{F}(\varphi)\in \mathscr{S}$ and $\displaystyle \xi \mapsto \frac{1}{\verti{\xi}^2}$ is integrable near the origin, we have
		$\xi\mapsto\displaystyle \mathscr{F}(\varphi)(\xi)/\verti{\xi} \in L^2\text{,}$
	and thus
	\begin{equation}
		\label{Td9}
		\ba 
		\ve\abs*{\int_{\R^N} T(\varphi)(\varphi-u_\ve)}
		&=
		\frac{\ve}{(2\pi)^N}\abs*{\int_{\R^N} 
			\frac{1}{\abs{\xi}m(\xi/\verti{\xi})}\mF(\varphi)(\xi)\overline{\abs{\xi}\mF(\varphi-u_\ve)(\xi)}}\\
		&\leq C \ve \left( \int_{\R^N} \left(\frac{\mF(\varphi)(\xi)}{\abs{\xi}m(\xi/\verti{\xi})}\right)^2 \intd\xi \right)^{1/2} \vertii{\nabla (\varphi-u_{\ve})}_{L^2}
		\\ &\leq C \ve \left( \int_{\R^N} \left(\frac{\mF(\varphi)(\xi)}{\abs{\xi}}\right)^2 \intd\xi \right)^{1/2} \vertii{\nabla (\varphi-u_{\ve})}_{L^2}
		\\  & \hspace{20 pt} \to
		0\text{,}
		\ea
	\end{equation}
	using \eqref{H2} for the last inequality and Lemma \ref{extract} for the limit.
	
	Passing to the limits $\varepsilon \to 0$ in \eqref{Td8}, using \eqref{Td9} and the weak continuity in $\mathring{H}^1$ of $a(v,\cdot)$ and $F$, we have
	\begin{equation}\label{Te1}
		a(v,v-\tilde{u})
		\ge
		F(v-\tilde{u})\text{, }
		\fo v\in C\cs^\infty \text{, }v\ge0\text{.}
	\end{equation}
	
	Using the fact that a nonnegative map \(v\in\mathring{H}^1\) can be approximated with nonnegative maps in \(C\cs^\infty\), we have
		\begin{equation}\label{Te2}
		a(v,v-\tilde{u})
		\ge
		F(v-\tilde{u})\text{, }
		\fo v\in \mathring{H}^1 \text{, }v\ge0\text{.}
	\end{equation}
	
	Minty's Lemma \cite[Lemma 1.5]{kinderlehrer2000introduction} yields the desired conclusion \eqref{eq goal} since the operator
	\[
		\mathring{H}^1 \ni v \mapsto
		G(v)\in (\mathring{H}^1)^\ast \text{,}
	\]
	with 
	\[
		\DualityProd{G(v)}{\varphi}\coloneq a(v,\varphi)-F(\varphi)\text{,} \ \fo \varphi\in \mathring{H}^1 \text{,}
	\]
 is monotone (this can be shown as in Lemma \ref{Td6}). For the convenience of the reader, we reproduce the proof in \cite[Lemma 1.5]{kinderlehrer2000introduction}.
 
 Let $v \in \mathring{H}^1 $ be nonnegative and set \(w_t=(1-t)\tilde{u}+tv\ge0\) for each \(0\le t\le1\). We have
 \be\label{Te3}
 	\ba 
 	t\left(a(w_t, v-\tilde{u})-F(v-\tilde{u}) \right)& =a(w_t,t(v-\tilde{u}))-F(t(v-\tilde{u}))\\ 
 	&= a(w_t,w_t-\tilde{u})-F(w_t-\tilde{u})
 	\\ & \geq 0
 	\text{,}
 	\ea
 	\ee
 	 using \eqref{Te2} for the inequality.
 This implies that
 \begin{equation}\label{Te4}
 	\ba 
 	a((1-t)\tilde{u}+tv, v-\tilde{u})-F(v-\tilde{u})
 	\ge0
 	\text{,} \ \fo 0<t<1\text{.}
 	\ea
 \end{equation}
 Letting $t \to 0$, we obtain 
 \bes
 a(\tilde{u},v-\tilde{u})\ge F(v-\tilde{u})\text{, } 
 \fo v\in \mathring{H}^1 \text{,} \ v\ge0\text{.} \qedhere
 \ees
\end{proof}

\noindent
\textit{Conclusions and further perspectives.}  We have shown that minimizers of $\mathscr{E}$ have $L^2$ density under rather general assumptions. This raises a few natural questions. First, it would be interesting to know whether $\mu \in L^p$ when $V$ is in $W^{2,p}_{\loc}$, $p\neq2$, instead of $H^2_{\loc}$. We may also wonder whether the approach presented in our article could be applied to the fractional and $2D$ counterparts of $\mathscr{E}$.
More precisely, replacing $W$ with 
\bes
W_{s}(x)= \frac{\Psi(x/\verti{x})}{\verti{x}^{N-2s}}\text{, }0<s<1\text{,}
\ees
or with
\bes
W_{2D}(x)=-\log(\verti{x})+ \Psi(x/\verti{x})\text{,}
\ees
when $N=2$, we may aim at regularity results on the minimizers of the corresponding energies, under assumptions similar to \eqref{hv}, \eqref{H1}, and \eqref{H2}. In a forthcoming note \cite{BullionInPrep}, we show that our method can be adapted to the latter case.

\appendix
\section{Appendix. Existence and uniqueness of a minimizer}\label{A}
This appendix is devoted to the proof of the following result, which holds when \eqref{H2} is relaxed to 
\be\label{relax}\tag{HFWR}
\mathscr{F}(W) \geq 0\text{.}
\ee
\begin{proposition}\label{app res}
Assume that \eqref{H1} and \eqref{relax} hold and that $V$ is a non-negative, coercive, and lower semi-continuous.

 Then, there exists a unique minimizer of the functional $\mathscr{E}$. Moreover, this minimizer is compactly supported.
\end{proposition}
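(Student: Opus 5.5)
We follow the classical direct method. For existence and compact support, and finally uniqueness, we use \eqref{H1}, \eqref{relax}, and the coercivity and lower semicontinuity of $V$.

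\emph{Existence.} Take a minimizing sequence $(\nu_n)\subset\mathscr{P}(\R^N)$. First, $\inf\mE<\infty$: test with the normalized Lebesgue measure on a ball, since $W$ is locally integrable. Because $W>0$, the bound $\mE(\nu_n)\le C$ gives $\int V\intd\nu_n\le C$. Coercivity of $V$ then yields tightness, since $\nu_n(\{V>R\})\le C/R$ and $\{V\le R\}$ is bounded. By Prokhorov, up to a subsequence $\nu_n\rightharpoonup\nu$ narrowly, with $\nu\in\mathscr{P}(\R^N)$.

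Lower semicontinuity comes in two parts. For the interaction term, $W$ is positive and lower semicontinuous on $\R^N\times\R^N$ (with value $+\infty$ on the diagonal). Writing it as an increasing limit of bounded continuous functions $\min(W,k)$, and using $\nu_n\otimes\nu_n\rightharpoonup\nu\otimes\nu$, we get $\liminf\iint W\,\intd\nu_n\intd\nu_n\ge\iint W\,\intd\nu\intd\nu$. The same argument with $\min(V,k)$ handles the potential term. Hence $\nu$ is a minimizer.

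\emph{Uniqueness.} The key step, and the main obstacle, is strict convexity of $\nu\mapsto\iint W\,\intd\nu\intd\nu$ on probability measures of finite energy. We will show that for any signed measure $\sigma=\nu_1-\nu_2$ with $\nu_1,\nu_2$ of finite energy and $\sigma(\R^N)=0$,
\[
\iint W\,\intd\sigma\intd\sigma=\frac{1}{(2\pi)^N}\int\mathscr{F}(W)\verti{\mathscr{F}(\sigma)}^2\intd\xi\ge0 ,
\]
with equality only if $\sigma=0$. To justify the identity we mollify, $\sigma_\ve=\sigma\ast\rho_\ve$. The cross energies $\iint W\,\intd\nu_i\intd\nu_j$ are finite by Cauchy--Schwarz for positive-definite kernels; this can first be established on mollified measures and then passed to the limit. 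Passing $\ve\to0$ on the physical side uses monotone and dominated convergence: the mollified energies of positive measures are controlled because $W\ast\rho_\ve\ast\rho_\ve\lesssim E\ast\rho_\ve\ast\rho_\ve\le E$ pointwise, using \eqref{H1} together with superharmonicity of $E$ as in Lemma \ref{Enu}. On the Fourier side we use Fatou.

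For strictness we need $\mathscr{F}(W)>0$ on a set of positive measure in every direction, since equality forces $\mathscr{F}(\sigma)=0$ on $\supp\mathscr{F}(W)$. Here the relaxed hypothesis \eqref{relax} is delicate. The natural route is to note that $\mathscr{F}(W)$ is a nonzero $(-2)$-homogeneous, even, nonnegative distribution, and that $\mathscr{F}(\sigma)$ is real-analytic, because $\sigma$ is compactly supported after a truncation reduction, or at least continuous. If positivity holds only on a cone, analyticity propagates vanishing on an open set to all of $\R^N$. So I would first reduce to compactly supported minimizers (shown below) and then apply this analyticity argument. Once strict convexity holds, and since $\nu\mapsto\int V\intd\nu$ is linear, two minimizers $\nu_1\ne\nu_2$ would give $\mE\bigl(\tfrac{\nu_1+\nu_2}{2}\bigr)<\min\mE$, which is a contradiction.

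\emph{Compact support.} For any minimizer $\mu$, we derive \eqref{el1}--\eqref{el2} by the variations $\mu\mapsto(1-t)\mu+t\nu$, as in \cite{mora2019equilibrium}; this gives $W\ast\mu+\tfrac12V=\alpha$ $\mu$-a.e. Since $W\ast\mu\ge0$, $\mu$-a.e.\ we have $V\le2\alpha$. By coercivity, $\{V\le2\alpha\}$ is bounded, so $\supp\mu$ is contained in its closure, which is compact.
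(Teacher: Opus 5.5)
Your argument is correct. Existence and uniqueness follow the paper's route. Existence is the direct method: tightness from coercivity, Prokhorov, then lower semicontinuity. Uniqueness is strict convexity of the interaction term via the Fourier representation of Lemma~\ref{rep fo}, applied to compactly supported minimizers. Their Fourier transforms are then real analytic, and Lemma~\ref{iso} upgrades vanishing on the positive-measure set $\{\mathscr{F}(W)>0\}$ to vanishing everywhere.

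You phrase strictness as $\iint W\intd\sigma\intd\sigma>0$ for $\sigma=\nu_1-\nu_2\neq0$. This is equivalent to the paper's use of strict convexity of $z\mapsto\verti{z}^2$, via $I(\nu_t)=(1-t)I(\nu_1)+tI(\nu_2)-t(1-t)I(\sigma)$. It does, however, oblige you to extend the Fourier identity to signed measures, hence your cross-energy discussion. The paper avoids this by applying Lemma~\ref{rep fo} only to the positive measures $\nu_1,\nu_2,\nu_t$.

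The genuine difference is the compact-support step. The paper simply defers to Serfaty's argument. You deduce it from the Euler--Lagrange relation \eqref{el2}, which also yields the explicit localization $\supp\mu\subset\{V\le 2\alpha\}$. This works, provided you carry out the first variation for an arbitrary minimizer, without the compact support that Lemma~\ref{obs eq} presupposes. That is easy: only the test measures $\nu=\mu_{|S}/\mu(S)$ with $S=\{W\ast\mu+\tfrac12V<\alpha\}$ are needed. All their energies and cross terms are controlled by those of $\mu$, and the first-variation inequality forces $\mu(S)=0$. Since $\int(W\ast\mu+\tfrac12V)\intd\mu=\alpha$, you then get equality $\mu$-a.e. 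A more elementary alternative skips the first variation entirely. Compare $\mE(\mu)$ directly with the energy of the normalized restriction of $\mu$ to $\{V\le L\}$; by positivity of $W$ and $V$ this forces $\mu(\{V>L\})=0$ once $L>2\mE(\mu)$.

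A few small fixes. To get $\inf\mE<\infty$ you need a ball on which $V$ is bounded, not just local integrability of $W$; such a ball exists by Baire, since $V$ is real-valued and l.s.c. Next, $\min(V,k)$ is only l.s.c., so use the portmanteau inequality for l.s.c.\ functions bounded below rather than weak convergence against continuous functions. Finally, what you actually obtain is vanishing of $\mathscr{F}(\sigma)$ on a set of positive measure, not on an open set or ``in every direction''. That is exactly what Lemma~\ref{iso} needs.
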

We first present a proof of the following fact, arguing as in Mora~\cite[Proposition 3.2]{mora2025nonlocal}.
\begin{lemma}\label{rep fo}
Let $\nu$ be a positive, finite, and compactly supported measure such that $\mathscr{E}(\nu)<\infty$. We have
\bes
\int_{\R^N}W\ast \nu \intd\nu=  \int_{\R^N} \mathscr{F}(W)(\xi) \verti{\mF(\nu)}^2 \intd\xi\text{.}
\ees
\end{lemma}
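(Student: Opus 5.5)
The plan is to regularize $\nu$ by mollification, apply Parseval at the regularized level, and then remove the regularization on each side separately. Set $\nu_\ve\coloneq\nu\ast\rho_\ve$, where $\rho$ is an even, nonnegative standard mollifier. Then $\nu_\ve\in C\cs^\infty$, and $\mF(\nu_\ve)=\mF(\nu)\mF(\rho)(\ve\cdot)$, where $\mF(\rho)$ is real and bounded by $1$.

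First I would establish the identity for $\nu_\ve$. Since $W\in\mathscr{S}'$ and $\nu_\ve\in\mathscr{S}$, we have $\mF(W\ast\nu_\ve)=\mF(W)\mF(\nu_\ve)$ in $\mathscr{S}'$. Pairing with $\nu_\ve$ and using Parseval for tempered distributions gives
\[
\int W\ast\nu_\ve\intd\nu_\ve=(2\pi)^{-N}\DualityProd{\mF(W)}{\verti{\mF(\nu_\ve)}^2}
\]
(I would keep track of the normalization constants consistently with the statement). By \eqref{relax}, $\mF(W)$ is a nonnegative tempered distribution, hence a positive Radon measure. It is $(-2)$-homogeneous, so it integrates $\min(1,\verti{\xi}^{-2})$-type weights near the origin, and the pairing equals the integral of the Schwartz function $\verti{\mF(\nu_\ve)}^2$ against this measure. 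Thus the right-hand side is $\int\mF(W)(\xi)\verti{\mF(\nu)(\xi)}^2\verti{\mF(\rho)(\ve\xi)}^2\intd\xi$, understood as an integral against the measure $\mF(W)$.

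Next I would pass to the limit $\ve\to0$ on both sides. On the Fourier side, $\verti{\mF(\rho)(\ve\xi)}^2\uparrow1$ pointwise if $\rho$ is chosen so that $\mF(\rho)\ge0$ is radially decreasing, e.g.\ $\rho=\sigma\ast\sigma$ with $\sigma$ a Gaussian-type kernel. In that case monotone convergence yields the limit $\int\mF(W)\verti{\mF(\nu)}^2$, possibly $+\infty$ a priori. If such a choice of $\rho$ is inconvenient, Fatou's lemma still gives ``$\le\liminf$''. On the physical side, $\int W\ast\nu_\ve\intd\nu_\ve=\iint W_\ve(x-y)\intd\nu(x)\intd\nu(y)$ with $W_\ve\coloneq W\ast\rho_\ve\ast\rho_\ve$. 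I need $W_\ve\to W$ pointwise away from $0$, with a domination allowing dominated convergence in $L^1(\nu\otimes\nu)$; this is where $\mE(\nu)<\infty$ enters, since it gives $W\in L^1(\nu\otimes\nu)$ because $V\ge0$.

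The main obstacle is this domination. For the Coulomb part $E$, superharmonicity (Lemma \ref{Enu}) gives $E\ast\rho_\ve\ast\rho_\ve\le E$. For general $W$, \eqref{H1} gives $W\approx E$, hence $W_\ve\lesssim E\ast\rho_\ve\ast\rho_\ve\le E\lesssim W$. So $W_\ve\le CW$, which is $\nu\otimes\nu$-integrable, while $W_\ve(z)\to W(z)$ for $z\ne0$ by continuity of $W$ off the origin. The diagonal has $\nu\otimes\nu$-measure zero by Lemma \ref{no dirac}. Dominated convergence then gives $\int W\ast\nu_\ve\intd\nu_\ve\to\int W\ast\nu\intd\nu$. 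Combining the two limits yields the identity, with both sides finite. Compact support of $\nu$ is used only to guarantee $\mF(\nu)$ is smooth and $\nu_\ve\in C\cs^\infty$.
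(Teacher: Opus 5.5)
Your proposal is correct and follows the paper's proof. In both, you mollify, apply Plancherel at level $\ve$, pass to the limit on the physical side by dominated convergence using $W\ast\eta_\ve\lesssim E\ast\eta_\ve\le E\lesssim W$ (superharmonicity of $E$, with $\eta_\ve=\rho_\ve\ast\rho_\ve$ radial), and handle the Fourier side by a monotone limit. The differences are cosmetic. You treat $\mF(W)$ as a positive tempered measure, which is slightly more careful than the paper under \eqref{relax}. You also use monotone convergence with a mollifier whose Fourier transform is radially decreasing, whereas the paper uses Fatou to get integrability of $\mF(W)\verti{\mF(\nu)}^2$ and then dominated convergence via $\verti{\mF(\rho_\ve)}\le 1$. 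That second step is also what your Fatou fallback would need to yield equality rather than a single inequality.
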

\begin{proof}

We approximate $\nu$ with the convolutions $\nu_{\ve}\coloneq \nu \ast \rho_{\ve}$, where $\rho \in C\cs^{\infty}$ is a radial mollifier. 

 Since $\nu$ is a finite and compactly supported, $\nu_{\ve}$ is in $C\cs^{\infty}$ and $\mathscr{F}(W\ast \nu_{\ve}) = \mathscr{F}(W) \mathscr{F}(\nu)\mathscr{F}(\rho_{\ve})$. For each $\ve>0$, the Plancherel formula yields
\be\label{smooth}
\int_{\R^N} (W\ast \nu_{\ve}) \intd{\nu_{\ve}} = \int_{\R^N} \mathscr{F}(W)(\xi) \verti{\mF(\nu)(\xi)}^2 \abs{\mF(\rho_{\ve})(\xi)}^2 \intd\xi\text{.}
\ee
We first study the convergence of the left-hand side in \eqref{smooth}.
Since 
    \bes
    \int_{\R^N}(W\ast \nu_{\ve})(x) \nu_{\ve}(x) \intd{x}= \left((W\ast \nu_{\ve})\ast \overset{\curlyvee}{\nu_{\ve}} \right)(0)= \left(W\ast \nu \ast \rho_{\ve} \ast  \overset{\curlyvee}{\nu} \ast \rho_{\ve}\right)(0)\text{,}
    \ees
	where \( \overset{\curlyvee}{\nu}(x)\coloneqq {\nu}(-x)\), we have, by Tonelli's theorem,
	\be\label{rewrite}
	\ba
	 \int_{\R^N}(W\ast \nu_{\ve})(x) \nu_{\ve}(x) \intd{x} &=  \left(W\ast (\rho_{\ve}\ast\rho_{\ve}) \ast \nu  \ast \overset{\curlyvee }{\nu} \right)(0)
	 \\  = &\int_{\R^N}\int_{\R^N} (W\ast \eta_{\ve})(x-y) \intd\nu(x) \intd\nu(y)\text{,} 
	\ea
	\ee
where $\eta_{\ve}= \rho_{\ve}\ast \rho_{\ve}$ is also a radial mollifier with compact support. Since $W$ is continuous outside the origin and $\lim_{x\to 0} W(x) =\infty$, we have that $W\ast \eta_{\ve}(x) \to W(x)$ for every $x\in \R^N$. On the other hand, we have, for each $x\in \R^N$, 
\bes
(W\ast\eta_{\ve})(x) \leq C(E\ast\eta_{\ve})(x) \leq C E(x) \leq C W(x)\text{.}
\ees
Here, we use \eqref{H1} for the first inequality and for the last one, while the second inequality is a consequence of the superharmonicity of $E$ (Lemma \ref{Enu}), since $\eta$ is a radial mollifier. Since $\mathscr{E}(\nu) <\infty$, we have 
\bes
\int_{\R^N}(W\ast \nu_{\ve})(x) \nu_{\ve}(x) \intd x=\int_{\R^N} (W\ast \eta_{\ve})(x-y) \intd\nu(x) \intd\nu(y) \to \int_{\R^N}(W\ast \nu)(x) \intd \nu(x)\text{,}
\ees
by dominated convergence. 

We next study the convergence of the right-hand side in \eqref{smooth}. We have $\mathscr{F}(\rho_{\ve})(\xi) \to 1 $, for each $\xi \in \R^N$. Thanks to the positivity of $\mathscr{F}(W)$, we therefore find that
\bes
\mathscr{E}(\nu)=\liminf \int_{\R^N} W\ast \nu_{\ve} \intd{\nu_{\ve}} \geq  \int_{\R^N} \mathscr{F}(W)(\xi) \verti{\mF(\nu)(\xi)}^2  \intd\xi\text{,}
\ees
by Fatou's lemma.
Thus, $\mathscr{F}(W)\verti{\mathscr{F}(\nu)}^2 \in L^1$ and since $\vertii{\mathscr{F}(\rho_{\ve})}_{L^{\infty}}$ is uniformly bounded, this allows us to obtain 
\bes
\int_{\R^N} \mathscr{F}(W)(\xi) \verti{\mF(\nu)(\xi)}^2 \abs{\mF(\rho_{\ve})(\xi)}^2 \intd\xi \to \int_{\R^N} \mathscr{F}(W)(\xi) \verti{\mF(\nu)(\xi)}^2  \intd\xi\text{,}
\ees
by dominated convergence.
\end{proof}
Before turning to the proof of Proposition \ref{app res}, we state a last useful lemma. 
\begin{lemma}\label{iso}
Let $f$ and $g$ be real analytic functions defined on $\R^N$ such that $f=g$ on a set $A$ of positive Lebesgue measure. Then, we have $f=g$.
\end{lemma}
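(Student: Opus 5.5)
The statement to prove is Lemma \ref{iso}: two real analytic functions on $\R^N$ that agree on a set $A$ of positive Lebesgue measure are equal. Setting $h\coloneq f-g$ reduces it to showing that a real analytic $h$ vanishing on a set of positive measure vanishes identically. We argue by contradiction, assuming $h\not\equiv 0$. The strategy is to show that the zero set of a nontrivial real analytic function is Lebesgue-null, by induction on the dimension $N$ combined with Fubini's theorem.

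\emph{Case $N=1$.} If $h\not\equiv0$, then the zeros of $h$ are isolated. Indeed, at an accumulation point $x_0$ of zeros, every derivative of $h$ vanishes, as one checks from the Taylor expansion. Hence the set of points where $h$ vanishes to infinite order is both open, because $h$ equals its Taylor series near such points, and closed. Since $\R$ is connected, this set is empty. Consequently, the zero set of $h$ is countable and has measure zero.

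\emph{Inductive step.} Assume the claim holds in dimension $N-1$, and write $x=(x',x_N)$. By Fubini's theorem, $|\{h=0\}|=\int_{\R}|\{x'\st h(x',x_N)=0\}|_{N-1}\intd x_N$. Each slice $x'\mapsto h(x',x_N)$ is real analytic on $\R^{N-1}$. So if $|\{h=0\}|>0$, the set $S\coloneq\{x_N\st h(\cdot,x_N)\equiv0\}$ has positive one-dimensional measure; in particular $S$ is uncountable and has an accumulation point. Now fix $x'$. The function $t\mapsto h(x',t)$ is real analytic on $\R$ and vanishes on $S$, so by the case $N=1$ it vanishes identically. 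Since $x'$ was arbitrary, $h\equiv0$, which contradicts our assumption.

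The main obstacle is the one-dimensional identity principle. This is where the connectedness argument on the set of points of infinite-order vanishing is needed, and it is the only place where analyticity is genuinely used. The remaining steps are routine measure theory.
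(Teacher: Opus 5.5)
The paper gives no proof of Lemma \ref{iso}: it is stated as a standard fact and used only once, in the uniqueness part of Proposition \ref{app res}. Your argument is correct and self-contained, so it supplies what the paper omits. It is the usual elementary proof. The base case is the one-dimensional identity principle, via the open-and-closed set of points of infinite-order vanishing. The inductive step is an induction on $N$ using Tonelli's theorem applied to the closed set $\{h=0\}$.

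Two small points could be tightened.

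\emph{Measurability of $S$.} The step ``$S$ has positive one-dimensional measure'' is best justified as follows. The set $S=\{x_N\st h(\cdot,x_N)\equiv 0\}$ is closed, being the intersection over $x'$ of the closed sets $\{t\st h(x',t)=0\}$. By the induction hypothesis, $S$ coincides with $\{x_N\st |\{x'\st h(x',x_N)=0\}|_{N-1}>0\}$, and this set has positive measure by Tonelli. The remark that $S$ is uncountable with an accumulation point is not needed. Your case $N=1$ already applies to any set of positive measure; your one-dimensional argument in fact shows that a single accumulation point of zeros suffices.

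\emph{Complex-valued functions.} In the paper, the lemma is applied to $\mF(\nu_1)$ and $\mF(\nu_2)$, which are complex-valued. Your proof goes through verbatim for $\mathbb{C}$-valued real analytic functions. Alternatively, you can apply it separately to the real and imaginary parts. Either way, it covers the actual use in the paper.
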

The proof Proposition \ref{app res} we present below  essentially follows  the arguments used in \cite[Section 2]{mora2019equilibrium}, \cite[Proposition 3.1]{mateu2023explicit}, and \cite[Lemma 2.10, Theorem 2.1]{serfaty2015coulomb}.
\begin{proof}[Proof of Proposition \ref{app res}]
\textit{Existence of a minimizer.} This fact follows from the direct method. Let $(\mu_n)$ be a minimizing sequence of $\mathscr{E}$ satisfying, with no loss of generality, $\sup_k \mathscr{E}(\mu_k) <\infty$. 

For each $\ve>0$, we find $R>0$ such that $V(x) \geq 1/\ve$ in \(B_R^\mathrm{c}\), by coercivity of $V$. We thus have, for each  $n$,
\bes
\int_{B_R^\mathrm{c}}  1/\ve \intd\mu \leq \int_{B_R^\mathrm{c}} V \intd\mu \leq \mathscr{E}(\mu_n) \leq \sup_k \mathscr{E}(\mu_k)\text{,}
\ees
using the positivity of $V$ and $W$ for the second inequality. This reads
\bes
\mu_n(B_R^\mathrm{c}) \leq \ve \sup_k \mathscr{E}(\mu_k)\text{,}\ \fo n\text{,} 
\ees
meaning that the sequence $(\mu_n)$ is tight. By Prokhorov's theorem, we may therefore find $\mu \in \mathscr{P}(\R^N)$ such that, up to extraction, $\mu_n \rightharpoonup \mu$ in the sense of probability measures. Since $\mathscr{E}$ is l.s.c.\ for the weak convergence of probability measures (see, e.g., \cite[Lemma 0.1 and (1.4.4)]{landkof1972foundations}), we find that
\bes
\mathscr{E}(\mu) \leq \liminf_k \mathscr{E}(\mu_k)= \inf \mathscr{E}\text{,}
\ees
and thus $\mu$ is a minimizer of $\mathscr{E}$.

\smallskip
\noindent
\textit{Minimizers of $\mathscr{E}$ are compactly supported}. This is obtained by repeating the arguments in  Serfaty \cite[Proof of Theorem 2.1, Step 1]{serfaty2015coulomb}. 

\smallskip
\noindent
\textit{Uniqueness.} By the above, it suffices to prove that $\mathscr{E}$ is stricly convex on the space of compactly supported probability measures $\nu$ such that $\mathscr{E}(\nu)<\infty$. Let $\nu_{1}, \nu_{2}$ be such probability measures. Setting $\nu_t=(1-t)\nu_1+t\nu_2$, for each $0<t<1$, we have
\bes
\int_{\R^N}(W\ast\nu_t) \intd{\nu_t}= \int_{\R^N} \mF(W)(\xi) \verti{(1-t) \mF(\nu_1)(\xi)+ t\mF(\nu_2)(\xi)}^2 \intd\xi\text{,}
\ees
by Lemma \ref{rep fo}. But $ \mathbb{C} \ni z \mapsto \verti{z}^2$ is  convex, thus we have 
\be\label{convex}
\ba
&\int_{\R^N}(W\ast\nu_t) \intd{\nu_t}=\int_{\R^N} \mF(W)(\xi) \verti{(1-t) \mF(\nu_1)(\xi)+ t\mF(\nu_2)(\xi)}^2 \intd\xi
\\
&\leq (1-t) \int_{\R^N} \mF(W)(\xi)\verti{\mF(\nu_1)(\xi)}^2 \intd\xi + t \int_{\R^N}\mF(W)(\xi)  \verti{\mF(\nu_2)(\xi)}^2 \intd\xi
\\ &=(1-t) \int_{\R^N}W\ast\nu_1 \intd{\nu_1} + t \int_{\R^N}W\ast\nu_2 \intd{\nu_2}\text{,}
\ea
\ee
using \eqref{relax}. 

We now show that if, for some $t\in (0,1)$,  we have equality in \eqref{convex}, then $\nu_1=\nu_2$. Indeed, consider a set   $A\subset\R^N$ of positive Lebesgue measure where $\mF(W)>0$.  If there is equality in \eqref{convex}, then $\mathscr{F}(\nu_1)=\mathscr{F}(\nu_2)$ on $A$, since $z \mapsto \verti{z}^2$ is stricly convex. But $\mF(\nu_{1,2})$ are real analytic functions, since $\nu_{1,2}$ are compactly supported (this is an easy case of the Paley--Wiener--Schwartz theorem). Therefore, Lemma \ref{iso} implies that $\mathscr{F}(\nu_1)=\mathscr{F}(\nu_2)$ everywhere and this yields the desired conclusion.
\end{proof}

\bibliographystyle{plain}
\bibliography{min}

\end{document}